\tikzset{every picture/.style={line width=0.75pt}} %set default line width to 0.75pt   
 \patchcmd\Gread@eps{\@inputcheck#1 }{\@inputcheck"#1"\relax}{}{}
 \patchcmd\Gread@eps{\@inputcheck#1 }{\@inputcheck"#1"\relax}{}{}
\newcommand{\intav}[1]{\mathchoice {\mathop{\vrule width 6pt height 3 pt depth  -2.5pt
\kern -8pt \intop}\nolimits_{\kern -6pt#1}} {\mathop{\vrule width
5pt height 3  pt depth -2.6pt \kern -6pt \intop}\nolimits_{#1}}
{\mathop{\vrule width 5pt height 3 pt depth -2.6pt \kern -6pt
\intop}\nolimits_{#1}} {\mathop{\vrule width 5pt height 3 pt depth
-2.6pt \kern -6pt \intop}\nolimits_{#1}}}
\def\polhk#1{\setbox0=\hbox{#1}{\ooalign{\hidewidth\lower1.5ex\hbox{`}\hidewidth\crcr\unhbox0}}}
\def\XXint#1#2#3{{\setbox0=\hbox{$#1{#2#3}{\int}$ }
\vcenter{\hbox{$#2#3$ }}\kern-.6\wd0}}
 \newcommand{\Rr}{\mathbb R}
\newcommand{\dist}{\operatorname{dist}}
\newtheorem{theorem}{Theorem}
\newtheorem{definition}{Definition}
\newtheorem{lemma}{Lemma}
\newtheorem{corollary}{Corollary}
\newtheorem{proposition}{Proposition}
\newtheorem{remark}{Remark}
\newtheorem{Assumption}{A}
\patchcmd{\env@cases}{1.2}{.8}{}{}
\begin{document}

\title{Boundary regularity for a fully nonlinear free transmission problem}
\author{David Jesus, Edgard A. Pimentel and David Stolnicki}
	
\date{\today} %%

\maketitle

\begin{abstract}

\noindent We examine boundary regularity for a fully nonlinear free transmission problem. We argue using approximation methods, comparing the operators driving the problem with a limiting profile. Working natural conditions on the data of the problem, we produce regularity estimates in Sobolev and  $C^{1,{\rm Log-Lip}}$-spaces. Our findings extend recent developments in the literature to the free boundary setting.

\medskip

\noindent \textbf{Keywords}:  Free transmission problems; regularity theory; boundary estimates.

\medskip 

\noindent \textbf{MSC(2020)}: 35B65; 35R35; 35B30.
\end{abstract}

\vspace{.1in}

\section{Introduction}\label{sec_intro}

We examine a fully nonlinear free transmission problem of the form
\begin{equation}\label{eq_main}
	\begin{split}
		F_1(D^2u,Du,x)=f&\hspace{.2in}\mbox{in}\hspace{.2in}\Omega\cap\left\lbrace u>0\right\rbrace\\
		F_2(D^2u,Du,x)=f&\hspace{.2in}\mbox{in}\hspace{.2in}\Omega\cap\left\lbrace u<0\right\rbrace,
	\end{split}
\end{equation}
equipped with the Dirichlet boundary condition $u=g$ on $\partial\Omega$. We suppose $F_i:S(d)\times\mathbb{R}^d\times\Omega\to\mathbb{R}$ satisfies a usual structure condition, for $i=1,\,2$. Also, $f\in L^p(\Omega)$ for some $p_0<p$, where $d/2<p_0$ is the integrability exponent above which the Aleksandrov-Bakelman-Pucci estimate is available for the solutions to $F=f\in L^p(\Omega)$. The boundary datum $g$ belongs to suitable functional spaces to be set further. We denote with $\Gamma(u)$ the free boundary given by
\[
	\Gamma(u):=\partial\left\lbrace u>0\right\rbrace \cup \partial\left\lbrace u<0\right\rbrace.
\]

Our findings comprise boundary regularity in spaces $C^{1,{\rm Log-Lip}}$ and boundary $W^{2,p}$-estimates. We emphasise these two classes of results are different. The first one concerns estimates for points $x_0\in\partial\Omega$. The latter ensures the existence of a universal neighbourhood $\Omega_\delta$ of $\partial\Omega$ such that $u\in W^{2,p}(\Omega_\delta)$. We stress that our findings do not rely on convexity or convexity-like assumptions on the operators $F_i$.

Transmission problems were introduced in the seminal work of Mauro Picone \cite{Picone1954}, motivated by a model in the theory of elasticity. Consequential, Picone's work attracted relevant attention, as several authors covered important aspects of the problem and expanded its scope; we refer the reader to \cite{Stampacchia1956,Lions1956, Campanato1957,Campanato1959,Campanato1959a,Schechter1960}. The contributions mentioned above focus on the existence and uniqueness of solutions as well as on a priori estimates. The regularity theory associated with transmission problems started to be examined circa 2000. See \cite{Li-Vogelius2000,Li-Nirenberg2003}; see also \cite{Bao-Li-Yin1,Bao-li-Yin2}. In those papers, the authors consider models driven by equations in the divergence form, in the scalar and vectorial settings. Their findings cover gradient estimates with applications to the study of insulation, conductivity and composite materials. 

The issue of regularity theory for transmission problems has attracted renewed interest. Mostly motivated by the questions and methods put forward in \cite{CSCS2021}. In that paper, the authors examine a transmission problem governed by the Laplace operator in the presence of a $C^{1,\alpha}$-regular interface. Arguing through mean-value formulas for harmonic functions and new stability results for close-to-flat interfaces, the authors prove that solutions are of class $C^{1,\alpha}$ up to the interface.

The program launched in \cite{CSCS2021} is pursued in the fully nonlinear context in \cite{SoriaCarro-Stinga}. Here, the authors prove a new Aleksandrov-Bakelman-Pucci maximum principle. They establish estimates for the solutions to fully nonlinear transmission problems up to the interface. The solutions' regularity matches the interface's regularity in $C^{k,\alpha}$-spaces, for $k=0,1,2$. 

In the aforementioned developments, the transmission interface is given and fixed a priori. Conversely, one could consider a model with solution-dependent, or \emph{free}, interfaces. For instance, in the region where solutions are positive, the diffusion process is governed by an operator $F_1$. However, an operator $F_2$ drives the model in the region where the solutions are negative.

Free transmission problems are mathematical models describing discontinuous diffusions. Such discontinuities are solution-dependent, giving rise to a free boundary. The discontinuous dependence on $u$ entails genuine difficulties in the analysis of \eqref{eq_main}. As a consequence, the existence and local regularity of the solutions require new methods and techniques; see \cite{Pimentel-Swiech2022,Pimentel-Santos2023}. For related developments in the degenerate fully nonlinear setting, see \cite{Huaroto-Pimentel-Rampasso-Swiech}; see also \cite{DJ1,DJ2,CdF}.

We examine the boundary regularity for $L^p$-viscosity solutions to \eqref{eq_main}. Our focus lies on points at the intersection of $\partial\Omega$ with the free boundary $\Gamma(u)$. Indeed, if $x_0\in \partial\Omega$ but $x_0\notin \Gamma(u)$, the regularity of solutions in a neighbourhood of $x$ is well-understood and documented; see, for instance, \cite{SilSir_2014}. Moreover, if we take $x_1\in\partial\Omega\cap\partial\left\lbrace u>0\right\rbrace$ with $x_1\notin \partial\left\lbrace u<0\right\rbrace$, the problem in \eqref{eq_main} reduces to a one-phase obstacle problem. 

Suppose however that $x_2\in\partial\Omega\cap \partial\left\lbrace u>0\right\rbrace\cap\partial\left\lbrace u<0\right\rbrace$. In this case, a new strategy is required to produce regularity results in the vicinity of $x_2$ in $\Omega^+$. Our analysis, though more general, addresses that case in particular; see Figure \ref{fig_intro}.

\bigskip

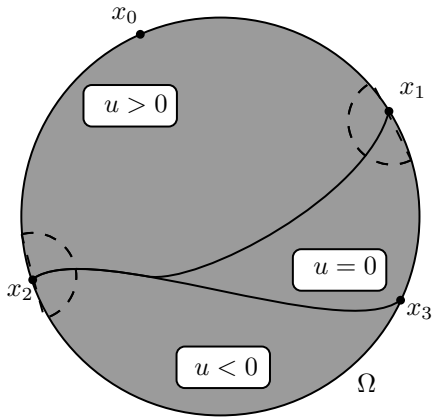
\begin{figure}[h]
\centering

\begin{tikzpicture}[x=0.55pt,y=0.55pt,yscale=-1,xscale=1]

%Shape: Circle [id:dp5644438575186321] 
\draw  [fill={rgb, 255:red, 155; green, 155; blue, 155 }  ,fill opacity=0.23 ] (62,150.91) .. controls (62,75.3) and (123.3,14) .. (198.91,14) .. controls (274.53,14) and (335.83,75.3) .. (335.83,150.91) .. controls (335.83,226.53) and (274.53,287.83) .. (198.91,287.83) .. controls (123.3,287.83) and (62,226.53) .. (62,150.91) -- cycle ;
%Curve Lines [id:da3360303677545764] 
\draw    (69.22,194.31) .. controls (109.22,164.31) and (282.83,238.52) .. (322.83,208.52) ;
%Curve Lines [id:da5543414450647156] 
\draw    (69.22,194.31) .. controls (83.21,183.82) and (113.51,186.07) .. (148.73,192.15) .. controls (183.94,198.22) and (299.87,137.3) .. (314.87,78.3) ;
%Shape: Circle [id:dp5693959539487511] 
\draw  [fill={rgb, 255:red, 0; green, 0; blue, 0 }  ,fill opacity=1 ] (320.48,208.52) .. controls (320.48,207.23) and (321.53,206.18) .. (322.83,206.18) .. controls (324.12,206.18) and (325.18,207.23) .. (325.18,208.52) .. controls (325.18,209.82) and (324.12,210.87) .. (322.83,210.87) .. controls (321.53,210.87) and (320.48,209.82) .. (320.48,208.52) -- cycle ;
%Shape: Circle [id:dp21064006689416226] 
\draw  [fill={rgb, 255:red, 0; green, 0; blue, 0 }  ,fill opacity=1 ] (312.48,78.52) .. controls (312.48,77.23) and (313.53,76.18) .. (314.83,76.18) .. controls (316.12,76.18) and (317.18,77.23) .. (317.18,78.52) .. controls (317.18,79.82) and (316.12,80.87) .. (314.83,80.87) .. controls (313.53,80.87) and (312.48,79.82) .. (312.48,78.52) -- cycle ;
%Shape: Circle [id:dp08308752928057639] 
\draw  [fill={rgb, 255:red, 0; green, 0; blue, 0 }  ,fill opacity=1 ] (67.48,194.52) .. controls (67.48,193.23) and (68.53,192.18) .. (69.83,192.18) .. controls (71.12,192.18) and (72.18,193.23) .. (72.18,194.52) .. controls (72.18,195.82) and (71.12,196.87) .. (69.83,196.87) .. controls (68.53,196.87) and (67.48,195.82) .. (67.48,194.52) -- cycle ;
%Rounded Rect [id:dp30681177460319686] 
\draw  [fill={rgb, 255:red, 255; green, 255; blue, 255 }  ,fill opacity=1 ] (104.74,66.14) .. controls (104.74,62.99) and (107.3,60.43) .. (110.46,60.43) -- (163.29,60.43) .. controls (166.44,60.43) and (169,62.99) .. (169,66.14) -- (169,83.29) .. controls (169,86.44) and (166.44,89) .. (163.29,89) -- (110.46,89) .. controls (107.3,89) and (104.74,86.44) .. (104.74,83.29) -- cycle ;
%Rounded Rect [id:dp38998939226041196] 
\draw  [fill={rgb, 255:red, 255; green, 255; blue, 255 }  ,fill opacity=1 ] (248.74,179.14) .. controls (248.74,175.99) and (251.3,173.43) .. (254.46,173.43) -- (307.29,173.43) .. controls (310.44,173.43) and (313,175.99) .. (313,179.14) -- (313,196.29) .. controls (313,199.44) and (310.44,202) .. (307.29,202) -- (254.46,202) .. controls (251.3,202) and (248.74,199.44) .. (248.74,196.29) -- cycle ;
%Rounded Rect [id:dp7136339688574889] 
\draw  [fill={rgb, 255:red, 255; green, 255; blue, 255 }  ,fill opacity=1 ] (168.74,246.14) .. controls (168.74,242.99) and (171.3,240.43) .. (174.46,240.43) -- (227.29,240.43) .. controls (230.44,240.43) and (233,242.99) .. (233,246.14) -- (233,263.29) .. controls (233,266.44) and (230.44,269) .. (227.29,269) -- (174.46,269) .. controls (171.3,269) and (168.74,266.44) .. (168.74,263.29) -- cycle ;
%Shape: Arc [id:dp6534670806649483] 
\draw  [draw opacity=0][dash pattern={on 4.5pt off 4.5pt}] (62.17,163.17) .. controls (77.4,159.13) and (93.34,167.63) .. (98.33,182.8) .. controls (103.5,198.54) and (94.94,215.5) .. (79.2,220.67) .. controls (78.78,220.81) and (78.35,220.94) .. (77.93,221.06) -- (69.83,192.18) -- cycle ; \draw  [dash pattern={on 4.5pt off 4.5pt}] (62.17,163.17) .. controls (77.4,159.13) and (93.34,167.63) .. (98.33,182.8) .. controls (103.5,198.54) and (94.94,215.5) .. (79.2,220.67) .. controls (78.78,220.81) and (78.35,220.94) .. (77.93,221.06) ;  
%Shape: Arc [id:dp15413881030008514] 
\draw  [draw opacity=0][dash pattern={on 4.5pt off 4.5pt}] (330.09,112.09) .. controls (315.95,119.07) and (298.65,113.89) .. (290.75,100.01) .. controls (282.56,85.61) and (287.59,67.29) .. (301.99,59.1) .. controls (302.38,58.88) and (302.77,58.67) .. (303.17,58.47) -- (316.83,85.18) -- cycle ; \draw  [dash pattern={on 4.5pt off 4.5pt}] (330.09,112.09) .. controls (315.95,119.07) and (298.65,113.89) .. (290.75,100.01) .. controls (282.56,85.61) and (287.59,67.29) .. (301.99,59.1) .. controls (302.38,58.88) and (302.77,58.67) .. (303.17,58.47) ;  
%Shape: Circle [id:dp41412502863376477] 
\draw  [fill={rgb, 255:red, 0; green, 0; blue, 0 }  ,fill opacity=1 ] (141.48,25.52) .. controls (141.48,24.23) and (142.53,23.18) .. (143.83,23.18) .. controls (145.12,23.18) and (146.18,24.23) .. (146.18,25.52) .. controls (146.18,26.82) and (145.12,27.87) .. (143.83,27.87) .. controls (142.53,27.87) and (141.48,26.82) .. (141.48,25.52) -- cycle ;

% Text Node
\draw (117,65) node [anchor=north west][inner sep=0.75pt]   [align=left] {$\displaystyle u >0$};
% Text Node
\draw (179,246) node [anchor=north west][inner sep=0.75pt]   [align=left] {$\displaystyle u< 0$};
% Text Node
\draw (261,177) node [anchor=north west][inner sep=0.75pt]   [align=left] {$\displaystyle u=0$};
% Text Node
\draw (50,196) node [anchor=north west][inner sep=0.75pt]   [align=left] {$\displaystyle x_{2}$};
% Text Node
\draw (320,57) node [anchor=north west][inner sep=0.75pt]   [align=left] {$\displaystyle x_{1}$};
% Text Node
\draw (324.83,209.18) node [anchor=north west][inner sep=0.75pt]   [align=left] {$\displaystyle x_{3}$};
% Text Node
\draw (291,257) node [anchor=north west][inner sep=0.75pt]   [align=left] {$\displaystyle \Omega$};
% Text Node
\draw (122,5) node [anchor=north west][inner sep=0.75pt]   [align=left] {$\displaystyle x_{0}$};

\end{tikzpicture}

\caption{Boundary regularity regimes.}\label{fig_intro}

\end{figure}

\bigskip

Our argument relies on approximation methods and imports regularity information from a homogeneous, fully nonlinear PDE back to \eqref{eq_main}. Our main ingredient is a pair of viscosity inequalities. Indeed, let $u\in C(\Omega)$ be an $L^p$-viscosity solution of
\begin{eqnarray}\label{eq_viscmin}
	\begin{split}
    		&\min\left(F_1(D^2u,Du,x),F_2(D^2u,Du,x)\right)\leq \left|f\right|& \hspace{.2in}\mbox{in}\hspace{.2in} \Omega,\\
		&\max\left(F_1(D^2u,Du,x),F_2(D^2u,Du,x)\right)\geq -\left|f\right|& \hspace{.2in}\mbox{in}\hspace{.2in} \Omega,
	\end{split}
\end{eqnarray}
satisfying 
\begin{equation}\label{eq_bdry}
	u=g\hspace{.2in}\mbox{on}\hspace{.2in}\partial\Omega.
\end{equation}

We suppose there exists a uniformly elliptic operator $F$ satisfying an appropriate closeness regime with respect to $F_1$ and $F_2$. Also, we require the source term $f$ to be in $L^p(B_1^+)$, for some $p>d$. Under these conditions, and geometric hypotheses on the domain $\Omega$, we establish $C^{1,{\rm Log-Lip}}$-boundary regularity estimates for the solutions to \eqref{eq_viscmin}-\eqref{eq_bdry}. This is the content of our first main result, stated with $\Omega=B_1^+$ for simplicity.

\begin{theorem}[$C^{1,\text{Log-Lip}}$-boundary regularity estimates]\label{Theo_Log-Lip}
Let  $u \in {C}(B_1^+)$ be a normalized viscosity solution of \eqref{eq_viscmin}-\eqref{eq_bdry}. Suppose Assumptions A\ref{assump_domain}, A\ref{assump_Felliptic}, and A\ref{assump_F-kappau}, to be detailed further, are in force. Suppose further $f\in L^p(B_1^+)$, for some $p>d$. Then $u\in C^{1,{\rm Log-Lip}}\left(B_1^0\right)$. That is, there exists a constant $C>0$ such that, for every $x_0 \in B_{1/2}^0$, we have 
	\begin{equation*}
		\sup_{x\in B_r^+(x_0)}\left|u(x)-u(x_0)-Du(x_0)(x-x_0)\right| \leq C\left|x-x_0\right|^2\ln\frac{1}{\left|x-x_0\right|}.
	\end{equation*}
Moreover, $C=C\left(d,\lambda,\Lambda,\partial\Omega, \left\|u\right\|_{L^\infty(B_1^+)},\left\|f\right\|_{L^p(B_1^+)}\right)$.
\end{theorem}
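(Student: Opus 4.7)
The strategy is an approximation scheme: at each dyadic scale around the chosen boundary point, compare $u$ with a solution of a homogeneous, constant-coefficient, uniformly elliptic equation on a half-ball with flat Dirichlet data, extract a tangent plane, and iterate. Up to translation we may take $x_0 = 0 \in B_{1/2}^0$, and after subtracting an appropriate affine function (consistent with whatever regularity of $g$ the assumption on the Dirichlet datum provides) we normalise so that $u(0)=0$, $u \equiv 0$ on $B_1^0$, $\|u\|_{L^\infty(B_1^+)} \le 1$, $\|f\|_{L^p(B_1^+)} \le \delta$, and $F_1, F_2$ are $\delta$-close to a fixed uniformly elliptic operator $F$ in the sense of A\ref{assump_F-kappau}, for a $\delta>0$ to be chosen below.

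The core ingredient is an approximation lemma: for every $\epsilon>0$ there is $\delta>0$ such that, under the normalisation above, there exists $h$ solving $F(D^2 h)=0$ in $B_{7/8}^+$ with $h\equiv 0$ on $B_{7/8}^0$ satisfying $\|u-h\|_{L^\infty(B_{7/8}^+)}\le \epsilon$. I would establish this by contradiction: assume sequences $u_k, f_k, F_{i,k}$ falsify the claim; the Aleksandrov-Bakelman-Pucci estimate (applicable since $p>p_0$) delivers an equicontinuous family on $\overline{B_{7/8}^+}$, and the stability theory for $L^p$-viscosity solutions of the pair of inequalities \eqref{eq_viscmin} allows passage to the limit, with A\ref{assump_F-kappau} forcing both $F_{1,k}$ and $F_{2,k}$ to collapse to the same limit $F$.

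Boundary regularity for the constant-coefficient homogeneous equation $F(D^2 h)=0$ on the half-ball with vanishing flat data gives $h\in C^{1,1}$ at the origin, providing an affine $\ell(x)=\beta x_d$ with $|\beta|\le C_0$ and $\sup_{B_\rho^+}|h-\ell|\le M\rho^2$ for $\rho\in(0,1/2]$. Choosing $\rho$ so that $M\rho^2\le \rho^2/4$ and then $\epsilon:=\rho^2/4$ (which fixes $\delta$), the triangle inequality produces $\sup_{B_\rho^+}|u-\ell|\le \rho^2/2$. A rescaling $u_1(x):=2\rho^{-2}(u(\rho x)-\ell(\rho x))$ satisfies the normalised hypotheses again, using $p>d$ so that the rescaled source keeps its smallness at each step. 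Iterating yields affine functions $\ell_k(x)=\beta_k x_d$ with $\sup_{B_{\rho^k}^+}|u-\ell_k|\le \rho^{2k}$ and $|\beta_{k+1}-\beta_k|\le C\rho^k$, so $\beta_k\to\beta_\infty$ geometrically; the limiting slope realises $Du(0)$. For any $r\in(\rho^{k+1},\rho^k]$ with $k\simeq \log(1/r)/\log(1/\rho)$, the cumulative error of the iteration—lacking any strictly supra-quadratic gain at a single scale—accrues a factor $k$, yielding
\begin{equation*}
\sup_{x\in B_r^+}\bigl|u(x)-u(0)-Du(0)\cdot x\bigr|\le Cr^2\log(1/r),
\end{equation*}
with $C$ depending on the quantities listed in the theorem statement.

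The main obstacle is the approximation lemma. Unlike the single-equation regime, stability must be run for the two-sided min/max inequality system \eqref{eq_viscmin} under simultaneous perturbation of both operators and of the source; the closeness hypothesis A\ref{assump_F-kappau} provides the mechanism, but the identification of the limit requires care so as to preserve the inequality structure while collapsing $F_1$ and $F_2$ onto the same $F$. Once this is in place, the iteration and the log-Lipschitz bookkeeping follow a route familiar from borderline boundary estimates.
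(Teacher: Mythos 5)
There is a genuine gap in the iteration scheme. You carry only affine correctors $\ell_k(x)=\beta_k x_d$ and claim $\sup_{B_{\rho^k}^+}|u-\ell_k|\le \rho^{2k}$ at every scale. This induction cannot close: at a single scale the comparison function $h$ (solving $F=0$ with flat zero data) satisfies only $\sup_{B_\rho^+}|h-Dh(0)\cdot x|\le C_1\rho^2$ with a \emph{universal} constant $C_1\ge 1$ that does not shrink as $\rho\to 0$, so after adding the approximation error $\varepsilon$ you get $\sup_{B_\rho^+}|u-\ell|\le \varepsilon+C_1\rho^2$, which can never be forced below $\rho^2/2$; the gain at one scale is exactly quadratic, never supra-quadratic, as you yourself note. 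Consequently the affine-only recursion propagates a multiplicative constant, $A_{k+1}\le A_k(\varepsilon+C_1\rho^2)$, which yields at best $C^{1,\alpha}$ for some $\alpha<1$, not $r^2\log(1/r)$. Your write-up is also internally inconsistent: if the claimed bound $\rho^{2k}$ at scale $\rho^k$ with geometrically convergent slopes really held, the conclusion would be a pointwise $C^{1,1}$ estimate at $x_0$, strictly stronger than the theorem, and incompatible with the "factor $k$" you invoke at the end. The paper closes the loop differently: it carries full second-order correctors $a_kx_d+M_kx\cdot x$, using the boundary $C^{2,\alpha}$ estimate for $F=0$ (Proposition \ref{prop_boundary1}) to get a supra-quadratic error $C\rho^{2+\alpha}$ after subtracting the Taylor polynomial of $h$; this is what allows $\rho$ and $\tau$ to be chosen so the induction closes with error exactly $\rho^{2k}$. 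The logarithm then comes from the fact that the Hessian coefficients satisfy only $|M_{k+1}-M_k|\le C$ (bounded, non-summable increments), so $|M_k|\lesssim k$ and discarding the quadratic part costs $k\rho^{2k}\simeq r^2\log(1/r)$. Note also that carrying $M_k$ is precisely why the \emph{uniform-in-$M$} closeness A\ref{assump_F-kappau} is needed: the rescaled operators are shifted by the unbounded matrices $M_k$, and only a closeness hypothesis independent of the Hessian entry survives that shift.

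Two secondary remarks. First, your claim that "$p>d$ keeps the rescaled source small at each step" is not justified for the quadratic normalization you use: the rescaled source is essentially $f(\rho^k x)$, whose $L^d$-norm on $B_1^+$ is $\rho^{-k}\|f\|_{L^d(B_{\rho^k}^+)}$, and this is not controlled by $\|f\|_{L^p(B_1^+)}$ alone; this bookkeeping needs to be addressed explicitly. Second, your compactness/contradiction proof of the approximation lemma is a different (and in principle viable) route from the paper's, which argues directly by solving the Dirichlet problem for $F$ with boundary data $u$ and comparing via H\"older estimates, ABP and the closeness hypothesis (Proposition \ref{prop_approx_w2p} and Corollary \ref{cor_approx_2}); if you pursue compactness you must verify stability of $L^p$-viscosity solutions for the two-sided min/max inequalities, which the paper's direct argument avoids. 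But the decisive defect is the affine-only iteration.
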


To improve the findings in the previous theorem, one imposes further conditions on the limiting profile $F$. Namely, suppose one requires $F$ to be differentiable with respect to the matrix entry. In that case, it is possible to prove that solutions to \eqref{eq_viscmin}-\eqref{eq_bdry} are in the Sobolev space $W^{2,p}$ in a neighbourhood of $\partial\Omega$. This is the content of our second main result.

\begin{theorem}[Sobolev boundary regularity]\label{thm_Sobolev}
Let $u\in C(B_1^+)$ be a viscosity solution to \eqref{eq_viscmin}-\eqref{eq_bdry}. Suppose Assumptions A\ref{assump_domain}, A\ref{assump_Felliptic}, A\ref{assump_F-kappa}, and A\ref{assump_FC1}, to be detailed further, are in force. Suppose further $f\in L^d(B_1^+)$. Then $u\in W^{2,d}(B_{1/2}^+)$ and there exists a universal constant $C>0$ such that 
\[
	\left\|u\right\|_{W^{2,d}(B_{1/2}^+)}\leq C\left(\left\|u\right\|_{L^\infty(B_1^+)}+\left\|f\right\|_{L^d(B_1^+)}\right).
\]
\end{theorem}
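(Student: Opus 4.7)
The strategy is to import $W^{2,d}$-regularity from the limiting profile $F$ to the viscosity solutions of \eqref{eq_viscmin}-\eqref{eq_bdry} through an approximation scheme in the spirit of Caffarelli's $W^{2,p}$-theory. Assumption A\ref{assump_FC1}, namely the differentiability of $F$ in the Hessian entry, is the essential ingredient: it ensures that solutions $h$ of the homogeneous problem $F(D^2h)=0$ enjoy interior $C^{2,\alpha}$-estimates of Evans-Krylov type, together with boundary $W^{2,d}$-estimates on half-balls whenever the boundary data are sufficiently regular. The real difficulty is that $u$ itself does not solve a single uniformly elliptic equation: it only satisfies the pair of viscosity inequalities \eqref{eq_viscmin}, and the free interface $\Gamma(u)$ may cross $\partial\Omega$ transversally.

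First, I would prove a boundary approximation lemma: given $\ep>0$, if the closeness between $F$ and $F_1,F_2$ (quantified by A\ref{assump_F-kappa}) and $\|f\|_{L^d(B_1^+)}$ are controlled by some $\delta=\delta(\ep)$ (achieved by a standard rescaling), then there exists $h\in C(\overline{B_{3/4}^+})$ solving $F(D^2h)=0$ in $B_{3/4}^+$, with $h=u$ on $\partial B_{3/4}^+$, such that $\|u-h\|_{L^\infty(B_{3/4}^+)}\leq\ep$. This is a contradiction-compactness argument: a violating sequence, combined with the stability of $L^d$-viscosity solutions for \eqref{eq_viscmin} and the boundary Aleksandrov-Bakelman-Pucci machinery from \cite{SoriaCarro-Stinga}, would pass to a limit that is both a solution of $F(D^2\cdot)=0$ with zero boundary data and bounded away from zero, yielding a contradiction via uniqueness. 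Once this approximation is in force, the $C^{2,\alpha}$-regularity of $h$ supplies a quadratic polynomial $P$ such that $\|u-P\|_{L^\infty(B_\rho^+)}\leq \rho^{2+\alpha}/2$, for a universal $\rho\in(0,1/2)$. Rescaling and iterating at dyadic scales around each $x_0\in B_{1/2}^0$ produces a sequence of quadratic polynomials whose Hessians remain uniformly bounded by a constant $M$ depending on where the iteration must be stopped.

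Finally, letting $A_M\subset B_{1/2}^+$ be the exceptional set where the iterative approximation fails with opening bounded by $M$, the $W^{2,d}$-conclusion follows from a boundary Calder\'on-Zygmund decomposition showing $|A_M|\leq CM^{-d}$, which is equivalent to $D^2u\in L^d(B_{1/2}^+)$ together with the stated norm bound. The main obstacle lies in making this decomposition go through consistently across the free interface $\Gamma(u)\cap\partial\Omega$: since the comparator equation involves only $F$, the quadratic profile $P$ must simultaneously serve as a one-sided comparator for $F_1$ in $\{u>0\}$ and for $F_2$ in $\{u<0\}$, and the $C^{1,\llip}$-estimate of Theorem \ref{Theo_Log-Lip} will be used to guarantee the initial quadratic control that seeds the iteration uniformly in $x_0$. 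Combining the resulting boundary estimate with the interior $W^{2,d}$-regularity available from \cite{Pimentel-Swiech2022,Pimentel-Santos2023} on a collar neighbourhood of $\partial\Omega$ yields the global bound on $B_{1/2}^+$.
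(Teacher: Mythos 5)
Your overall skeleton (approximate $u$ by a solution $h$ of $F=0$ with $C^{2,\alpha}$ control near the flat boundary, then run a paraboloid/Calder\'on--Zygmund covering argument in the style of Caffarelli and Winter) is the same as the paper's, but two steps as you describe them would fail. The first is the closing measure estimate: a bound $|A_M|\leq CM^{-d}$ is \emph{not} equivalent to $D^2u\in L^d$; a decay of exactly order $M^{-d}$ only places the opening function in weak-$L^d$. By Lemma \ref{lemma_distrLp}, $L^d$-integrability requires summability of $M^{dk}\mu_\Theta(\eta M^k)$, i.e.\ a decay strictly faster than $M^{-d}$ along the dyadic openings $M^k$. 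This is exactly what the paper's iteration manufactures: Proposition \ref{Lm:iterations_w2p} yields $\alpha_{k+1}\leq\sigma(\alpha_k+\beta_k)$ with the free choice $\sigma=(2M^d)^{-1}$, where $\beta_k$ is the distribution function of the maximal function $M(f^d)$, controlled by its strong $(p,p)$-type; only then does $\sum_k M^{dk}\alpha_k$ converge. Your sketch omits the source term's contribution through $M(f^d)$ altogether, and with it the mechanism that upgrades the weak-type bound to the strong-type conclusion.

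The second gap is the seed of the iteration. You propose to seed it with the $C^{1,\llip}$ estimate of Theorem \ref{Theo_Log-Lip}, but (i) that theorem is proved under the uniform closeness hypothesis A\ref{assump_F-kappau}, which is not among the hypotheses of Theorem \ref{thm_Sobolev} (only A\ref{assump_F-kappa} is assumed, and the two regimes are different); (ii) the estimate holds only at points of $B_{1/2}^0$ and carries a logarithmic factor, so it provides neither a paraboloid of fixed opening touching $u$ nor the initial density information $|A_t\cap Q|\leq Ct^{-\mu}$ that the covering argument needs at every dyadic cube, including cubes away from $\partial\Omega$. The correct seed is a boundary version of Lin's $W^{2,\mu}$/measure estimate for the class $S^*(f)$, obtained by barrier and ABP-type touching arguments (the paper's Propositions \ref{Pro:W2delta_bound} and \ref{Pro:W2delta_Winter}, following \cite{WinterNiki}). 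Two smaller points: the $C^{2,\alpha}$ control of the comparison function $h$ is not of Evans--Krylov type (no convexity is assumed, and interior $C^{2,\alpha}$ fails for general uniformly elliptic $F$); it is the boundary-neighbourhood estimate of \cite{SilSir_2014}, Proposition \ref{prop_C2alpha_SS}, which is precisely what A\ref{assump_FC1} buys and is fed into the approximation Proposition \ref{prop_approx_w2p}. And the interior cubes of the decomposition are handled inside the iteration itself via the interior counterpart of Lemma \ref{Lm:first_it_w2p} (\cite[Lemma 7.11]{Caffarelli_Cabre1995}), rather than by invoking interior $W^{2,d}$ results for the transmission problem on a collar.
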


We remark on the connection between \eqref{eq_main} and \eqref{eq_viscmin}. Let $u\in W^{2,d}(\Omega)$ be an $L^d$-strong solution to \eqref{eq_main}. It is clear that $u$ is an $L^d$-strong solution to \eqref{eq_viscmin} and, therefore, an $L^d$-viscosity solution to the latter. Therefore, Theorems \ref{Theo_Log-Lip} and \ref{thm_Sobolev} yield information on strong solutions to \eqref{eq_main}. If $u\in C(\Omega)$ is an $L^p$-viscosity solution to \eqref{eq_main}, then it solves \eqref{eq_viscmin} in the $L^p$-viscosity sense in the subset $\left\lbrace u\neq 0\right\rbrace\cap \Omega$.

\begin{remark}[Propagation of boundary regularity]\normalfont
Regarding optimal regularity, one cannot expect the solutions to \eqref{eq_main} to have a H\"older-continuous Hessian. We produce an example where $F_1$ coincides with $F_2$ on the boundary, with both operators being convex. Moreover, $F_i(\cdot,x)\in C^\beta(B_1^+)$ for $i=1,2$. Still, the $C^{2,\alpha}$-regularity of the boundary data does not propagate to the domain's interior. Indeed, fix $d=2$ and consider the operator $F_i:S(d)\times B_1^+\to\mathbb{R}$, defined by
\[
	F_i(M,x):=\left(1+\left(-\frac{x_2}{2}\right)^i\right)^2{\rm Tr}(M),
\]
for $i=1,2$. Prescribe the transmission problem
\begin{equation}\label{eq_example}
	\begin{split}
		F_1(D^2u,x)=1&\hspace{.2in}\mbox{in}\hspace{.2in}B_1^+\cap\left\lbrace x_1<0\right\rbrace\\
		F_2(D^2u,x)=1&\hspace{.2in}\mbox{in}\hspace{.2in}B_1^+\cap\left\lbrace x_1>0\right\rbrace,
	\end{split}	
\end{equation}
with $u=0$ on $B_1^0$ and $u=g$ on $\partial B_1^+\setminus B_1^0$. Notice that in $B_1^+\cap\left\lbrace x_1<0\right\rbrace$ a solution to \eqref{eq_example} satisfies
\[
	\Delta u =\left(\frac{2}{2-x_2}\right)^2,
\]
whereas in $B_1^+\cap\left\lbrace x_1>0\right\rbrace$ it satisfies
\[
	\Delta u =\left(\frac{4}{4+x_2^2}\right)^2.
\]
The Laplacian of $u$ is discontinuous across $\left\lbrace x_1=0\right\rbrace$, leading to a discontinuous Hessian across this surface.
\end{remark}

The remainder of the paper is organised as follows. Our main assumptions are the subject of Section \ref{subsec_ma}, whereas Section \ref{subsec_auxiliaryresults} gathers preliminary notions and results used in the paper. We discuss the scaling properties of \eqref{eq_viscmin} in Section \ref{subsec_scaling}. Section \ref{sec_appmet} resorts to approximation methods to connect solutions to \eqref{eq_viscmin} with a limiting problem of the form $F=0$. We detail the proof of Theorem \ref{Theo_Log-Lip} in Section \ref{sec_proofthm1}. The final section presents the proof of Theorem \ref{thm_Sobolev}.

\section{Preliminaries}\label{sec_prelim}

In what follows, we detail our main assumptions and gather preliminary notions and results used in the paper. 

\subsection{Main Assumptions}\label{subsec_ma}

Our first assumption concerns the regularity of the domain $\Omega$.

\begin{Assumption}[Geometry of the domain]\label{assump_domain}
We suppose $\Omega\subset\mathbb{R}^d$ is a bounded domain. In addition, its boundary is locally the graph of a $C^{2,\beta}$-regular function for some fixed $\beta\in(0,1)$. We also suppose that $0 \in \partial \Omega$.
\end{Assumption}

We denote with $B_r(x)$ the ball of center $x\in\mathbb{R}^d$ and radius $r>0$, with $B_r(0)=B_r$. Given a point $x_0\in\partial\Omega$, we denote with $\Omega^+_{x_0}(r)$ the intersection
\[
	\Omega_{x_0}^+(r):=\Omega\cap B_r(x_0).
\]
Since we are interested in the boundary regularity of the solutions to \eqref{eq_viscmin}-\eqref{eq_bdry}, we also define $\Omega_{x_0}^0(r)$ as
\[
	\Omega_{x_0}^0(r):=\partial\Omega\cap B_r(x_0).
\]

The operators $F_1$ and $F_2$ satisfy a structural condition. Before detailing it, we recall the definition of the extremal Pucci operators.

\begin{definition}[Extremal Pucci operators]\label{def_pucci}
Fix $0 < \lambda \leq \Lambda$. We define the extremal Pucci operators $\mathcal{M}^-_{\lambda,\Lambda},\mathcal{M}^+_{\lambda,\Lambda}: S(d)\to\mathbb{R}$ as
\[
	\mathcal{P}^+_{\lambda,\Lambda}(M) = \Lambda\sum_{e_i > 0} e_i + \lambda \sum_{e_i < 0} e_i
\]
and
\[
	\mathcal{P}^-_{\lambda,\Lambda}(M) = \lambda\sum_{e_i > 0} e_i + \Lambda \sum_{e_i < 0} e_i,
\]
where $e_i$ are the eigenvalues of $M$.
\end{definition}

\begin{Assumption}[Structural condition]\label{assump_Felliptic}
Fix $0<\lambda\leq\Lambda$ and $K>0$. For $i=1,2$, we suppose the operator $F_i:S(d)\times\mathbb{R}^d\times\Omega\to\mathbb{R}$ satisfies
	\[
	\mathcal{M}^-_{\lambda,\Lambda}(M-N) -K\left|p-q\right| \leq F_i(M,p,x)-F_i(N,q,x)\leq \mathcal{M}^+_{\lambda,\Lambda}(M-N) +K\left|p-q\right|,
	\]
for every $M,N\in S(d)$, $p,q\in\mathbb{R}^d$ and $x\in\Omega$.
\end{Assumption}

The former assumption builds upon A\ref{assump_domain} allowing us to work under a flatness hypothesis on $\partial\Omega$ while remaining in the same class of operators; see \cite[Proposition 2.1]{SilSir_2014}. 

\begin{Assumption}[Closeness condition]\label{assump_F-kappa}
	For $i=1,2$, we suppose there exists a uniformly elliptic operator $F:S(d)\times\mathbb{R}^d\to\mathbb{R}$, and a constant $0<\kappa\ll1$ such that
\[	
	\left|F_i(M,p,x)-F(M,p)\right| \leq \kappa (1+|p|+|M|),
\]
for every $x\in \Omega$, $p\in\mathbb{R}^d$ and $M\in S(d)$.
\end{Assumption}

\begin{Assumption}[Uniform closeness condition]\label{assump_F-kappau}
	For $i=1,2$, we suppose there exists a uniformly elliptic operator $F:S(d)\times\mathbb{R}^d\to\mathbb{R}$, and a constant $0<\tau\ll1$ such that
\[	
	\left|F_i(M,p,x)-F(M,p)\right| \leq \tau (1+|p|),
\]
for every $x\in \Omega$, $p\in\mathbb{R}^d$ and $M\in S(d)$.
\end{Assumption}

Although the universal constants $0<\kappa,\tau\ll 1$ in Assumptions A\ref{assump_F-kappa} and A\ref{assump_F-kappau} are fixed, they are determined further in the paper. When examining Sobolev regularity, we require the limiting profile $F=F(M,p)$ to be differentiable with respect to $M$.

\begin{Assumption}[Differentiability of the limiting profile]\label{assump_FC1}
	We suppose the operator $F:S(d)\times\mathbb{R}^d\to\mathbb{R}$ is of class $C^1$ concerning its first entry.
\end{Assumption}

Under Assumption A\ref{assump_FC1}, we denote with $D_MF$ the derivative of $F$ with respect to $M$. The modulus of continuity for $D_MF$ is denoted with $\omega_F$. In what follows, we gather preliminary results used in the paper.

\subsection{Preliminary notions and auxiliary results}\label{subsec_auxiliaryresults}

We start with the definition of $L^p$-viscosity solutions for an equation of the form
\begin{equation}\label{eq_lorvao}
	G(D^2u,Du,u,x)=f\hspace{.2in}\mbox{in}\hspace{.2in}\Omega.
\end{equation}

\begin{definition}[Viscosity solution]\label{def_visclp}
Let $p>d/2$. We say that $u\in C(\Omega)$ is an $L^p$-viscosity sub-solution to \eqref{eq_lorvao} if, whenever $\phi\in W^{2,p}_{loc}(\Omega)$ is such that $u-\phi$ has a local minimum at $x_0\in\Omega$, we have
\[
	{\rm ess}\limsup_{x\to x_0}\left(G(D^2\phi(x),D\phi(x),u(x),x)-f(x)\right)\geq 0.
\]
We say that $u\in C(\Omega)$ is an $L^p$-viscosity super-solution to \eqref{eq_lorvao} if, whenever $\phi\in W^{2,p}_{loc}(\Omega)$ is such that $u-\phi$ has a local maximum at $x_0\in\Omega$, we have
\[
	{\rm ess}\liminf_{x\to x_0}\left(G(D^2\phi(x),D\phi(x),u(x),x)-f(x)\right)\leq 0.
\]
If $u\in C(\Omega)$ is an $L^p$-viscosity sub-solution and an $L^p$-viscosity super-solution to \eqref{eq_lorvao}, we say it is an $L^p$-viscosity solution to \eqref{eq_lorvao}.
\end{definition}

When testing against smooth functions $\phi\in C^2(\Omega)$, Definition \ref{def_visclp} yields the usual notion of $C$-viscosity solutions \cite[Section 2]{CIL}. We say that $u\in C(\Omega)$ is a \emph{normalized} viscosity solution if $\left\|u\right\|_{L^\infty(\Omega)}\leq 1$. Next, we recall a version of the Aleksandrov-Bakelman-Pucci maximum principle; see \cite[Theorem 1.1]{KoikeSwiech2022}.

\begin{proposition}[Aleksandrov-Bakelman-Pucci estimate]\label{prop_abp}
Fix $q>d$. Let $\gamma\in L^q(\Omega)$ and $f\in L^d(\Omega)$. Suppose $u\in C(\Omega)$ is a viscosity solution to 
\[
	\mathcal{M}^+_{\lambda,\Lambda}(D^2u)+\gamma(x)|Du|\geq -f\hspace{.2in}\mbox{in}\hspace{.2in}\Omega
\]
\[
	\left[\mbox{resp.}\hspace{.1in}\mathcal{M}^-_{\lambda,\Lambda}(D^2u)-\gamma(x)|Du|\leq f\hspace{.2in}\mbox{in}\hspace{.2in}\Omega\right].
\]
Then there exists a positive constant $C=C\left((d,\lambda,\Lambda,\left\|\gamma\right\|_{L^d(\Omega)}\right)$ such that 
\[
	\max_{\overline\Omega}u\leq \max_{\partial\Omega}u+C\,{\rm diam}(\Omega)\left\|f\right\|_{L^d(\Gamma^+(u))}
\]
\[
	\left[\mbox{resp.}\hspace{.1in}\max_{\overline\Omega}(-u)\leq \max_{\partial\Omega}(-u)+C\,{\rm diam}(\Omega)\left\|f\right\|_{L^d(\Gamma^+(-u))}\right].
\]
\end{proposition}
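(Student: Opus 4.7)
The plan is to follow the classical Aleksandrov-Bakelman-Pucci strategy based on the concave envelope construction, supplemented by the Escauriaza-type refinement needed to absorb the first-order term with $L^q$ coefficient, $q>d$. First I would reduce to the super-solution case by applying the result to $-u$, and by translation assume $u\leq 0$ on $\partial\Omega$. The task then becomes bounding $M:=\sup_\Omega u^+$ by $C\,{\rm diam}(\Omega)\|f\|_{L^d(\Gamma^+(u))}$, with $C$ depending on $\|\gamma\|_{L^q(\Omega)}$.

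Next I would introduce the concave envelope $\Gamma$ of $u^+$ (extended by zero outside $\Omega$) taken on a ball containing $\Omega$, so that the contact set $\Gamma^+(u):=\{u^+=\Gamma\}$ lies inside $\Omega$. On this set, $\Gamma$ is semiconcave and admits an Alexandrov Hessian $D^2\Gamma\leq 0$ a.e. A sup-convolution regularization combined with the $L^p$-viscosity super-solution property propagates the differential inequality to $\Gamma$ in a pointwise a.e.\ sense, yielding $\mathcal{P}^+_{\lambda,\Lambda}(D^2\Gamma)+\gamma(x)|D\Gamma|\geq -|f|$ a.e.\ on $\Gamma^+(u)$. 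Combining this with $D^2\Gamma\leq 0$ and the arithmetic-geometric mean inequality gives the pointwise bound $|\det D^2\Gamma|^{1/d}\leq (d\lambda)^{-1}(|f|+\gamma|D\Gamma|)$. The classical geometric area estimate states that $D\Gamma(\Gamma^+(u))\supset B_{M/{\rm diam}(\Omega)}$, so the area formula yields
\[
c_d\left(\frac{M}{{\rm diam}(\Omega)}\right)^d \leq \int_{\Gamma^+(u)}|\det D^2\Gamma|\,dx \leq C\int_{\Gamma^+(u)}\bigl(|f|+\gamma|D\Gamma|\bigr)^d\,dx.
\]

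The main obstacle is controlling the drift contribution $\int \gamma^d|D\Gamma|^d\,dx$, since the crude bound $|D\Gamma|\leq M/\operatorname{dist}(\cdot,\partial\Omega)$ is too lossy to close the estimate. To overcome this I would invoke a layer-cake decomposition together with a second area-formula estimate of the type $|\{|D\Gamma|>t\}|\leq Ct^{-d}\int_{\{|D\Gamma|>t\}}|\det D^2\Gamma|\,dx$, which produces sharp decay for the distribution function of $|D\Gamma|$. Combined with H\"older's inequality and the strict integrability gap $q>d$, this permits absorbing a sub-optimal power of $M$ back to the left-hand side, leaving only $\|f\|_{L^d(\Gamma^+(u))}$ on the right. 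The sub-solution statement follows by symmetry, applying the super-solution estimate to $-u$.
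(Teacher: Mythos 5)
The paper does not actually prove this proposition: it is recalled verbatim from \cite[Theorem 1.1]{KoikeSwiech2022}, so your argument has to stand on its own, and as written it has two genuine gaps. The first is the step where you claim that sup-convolution ``propagates'' the inequality to the concave envelope, yielding $\mathcal{P}^+_{\lambda,\Lambda}(D^2\Gamma)+\gamma(x)|D\Gamma|\geq -|f|$ a.e.\ on the contact set. This is precisely the hard point of ABP in the present setting: sup-convolutions interact well with $C$-viscosity solutions and continuous ingredients, but here $f\in L^d$ and $\gamma\in L^q$ are merely measurable and the equation only holds in the $\operatorname{ess}\limsup$ sense of the $L^p$-viscosity definition, so no pointwise inequality at contact points (or at points of twice differentiability of a sup-convolution) is available by this route. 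The known proofs (Caffarelli--Crandall--Kocan--\'Swi\polhk{e}ch for bounded drift, and \cite{KoikeSwiech2022} for $\gamma\in L^q$) deliberately avoid any pointwise inequality for $\Gamma$; they approximate the data by continuous functions, solve auxiliary extremal (Pucci) equations with strong solutions, and conclude by comparison. Without such an approximation scheme, your chain of pointwise estimates on $\Gamma^+(u)$ is unjustified.

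The second gap is the mechanism you propose for the drift term. The inequality $|\{|D\Gamma|>t\}|\leq Ct^{-d}\int_{\{|D\Gamma|>t\}}|\det D^2\Gamma|\,dx$ is not a valid property of concave envelopes: the area formula controls the measure of the \emph{image} $D\Gamma(\{|D\Gamma|>t\})$, not of the set itself, and on affine pieces of $\Gamma$ the a.e.\ (Alexandrov) Hessian vanishes while the set $\{|D\Gamma|>t\}$ may have full measure, so no decay of the distribution function of $|D\Gamma|$ can be extracted this way. The two standard ways to close the argument are: (i) the weighted area formula with weight $g(D\Gamma)=\left(|D\Gamma|^d+\mu^d\right)^{-1}$ (Gilbarg--Trudinger, Lemma 9.3), which absorbs $\gamma|D\Gamma|$ using only $\gamma\in L^d$ at the price of a constant depending exponentially on $\|\gamma\|_{L^d}^d$; or (ii) observing that, with the envelope taken over a ball of radius comparable to ${\rm diam}(\Omega)$, one has $|D\Gamma|\leq M/{\rm diam}(\Omega)$ on the contact set, so the drift contributes $\left(M/{\rm diam}(\Omega)\right)^d\|\gamma\|_{L^d(\Gamma^+(u))}^d$, which can be absorbed only when $\|\gamma\|_{L^d}$ is small on the contact set --- and this is exactly where $q>d$ enters, through a covering/iteration (Escauriaza-type, or the iterated comparison of Koike--\'Swi\polhk{e}ch) on balls of radius $r$ where $\|\gamma\|_{L^d(B_r)}\leq r^{1-d/q}\|\gamma\|_{L^q}$ is small. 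Your ``absorb a sub-optimal power of $M$'' step must be replaced by one of these mechanisms; the remaining skeleton (reduction by symmetry, envelope, $D\Gamma(\Gamma^+(u))\supset B_{M/{\rm diam}(\Omega)}$, area formula) is standard and fine.
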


Under the structural condition in Assumption A\ref{assump_Felliptic}, one notices that a solution $u\in C(\Omega)$ to the first equation in \eqref{eq_viscmin} satisfies
\[
	\mathcal{M}^-_{\lambda,\Lambda}(D^2u)-K|Du|\leq f\hspace{.2in}\mbox{in}\hspace{.2in}\Omega.
\]
Also, a solution $v\in C(\Omega)$ to the second equation in \eqref{eq_viscmin} satisfies
\[
	\mathcal{M}^+_{\lambda,\Lambda}(D^2u)+K|Du|\geq f\hspace{.2in}\mbox{in}\hspace{.2in}\Omega.
\]
Hence, the upper bounds in Proposition \ref{prop_abp} are available for the viscosity solutions to \eqref{eq_viscmin}. We proceed by recalling boundary-regularity results for viscosity solutions to fully nonlinear equations. Our strategy is to import information from those results into the context of the free transmission problems.

The next results account for the boundary regularity of viscosity solutions to
\medskip
\begin{equation}\label{eq_limit}
	\begin{cases}
		F(D^2u,Du)=0&\hspace{.2in}\mbox{in}\hspace{.2in}\Omega\\
		u=0&\hspace{.2in}\mbox{on}\hspace{.2in}\partial\Omega.
	\end{cases}
\end{equation}

\medskip

\begin{proposition}[Boundary regularity]\label{prop_boundary1}
Let $u\in C(\Omega)$ be a viscosity solution to \eqref{eq_limit}. Suppose Assumptions A\ref{assump_domain} and A\ref{assump_Felliptic} are in force. Then there exists a H\"older continuous function $H : B^0_{1/2} \to \mathbb{R}^{d \times d}$, and constants $\alpha=\alpha(d,\lambda,\Lambda)\in(0,1)$ and $C=C(d,\lambda,\Lambda,K)>0$ such that 
\[
	F(H, Du) = 0\hspace{.3in}\mbox{on}\hspace{.3in}B^0_{1/2},
\]
and
\[
	\left|u(x) - u(x_0)-Du(x_0) \cdot (x - x_0) - \frac{H(x_0) (x - x_0) \cdot (x - x_0)}{2} \right| \leq C |x - x_0 |^{2 + \alpha},
\]
for every $x \in B^+_1$, and $ x_0 \in B^0_{1/2}$.
Moreover,
\[
	\left\|H\right\|_{C^\alpha \left(B^0_{1/2}\right)} \leq C \left\|u\right\|_{L^\infty (B^+_1)}.
\]
\end{proposition}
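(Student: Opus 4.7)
The plan is to upgrade the boundary $C^{1,\alpha}$-estimates from \cite{SilSir_2014} to a pointwise quadratic expansion by means of a Caffarelli-style approximation-iteration argument adapted to the flat-boundary setting. By A\ref{assump_domain} together with A\ref{assump_Felliptic} and the homogeneous boundary data, the Silvestre--Sirakov theory produces $u \in C^{1,\alpha}(\overline{B_{3/4}^+})$ with quantitative control by $\|u\|_{L^\infty(B_1^+)}$; this fixes the linear part of the expansion, and since $u \equiv 0$ on $B_1^0$, one has $u(x_0) = 0$ and $Du(x_0)$ purely normal for each $x_0 \in B_{1/2}^0$.

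The core ingredient is an approximation lemma, proved by compactness: given $\eta > 0$, any normalized viscosity solution $v$ of $F(D^2 v, Dv) = 0$ in $B_1^+$ with $v = 0$ on $B_1^0$ admits a quadratic polynomial $Q$, vanishing on $\{x_d = 0\}$ and satisfying $F(D^2 Q, DQ) = 0$, with $\|v - Q\|_{L^\infty(B_{1/2}^+)} \leq \eta$. A contradiction argument produces a sequence $\{v_n\}$ converging in $C^{1,\alpha}(\overline{B_{3/4}^+})$—by the boundary Schauder estimate—to a limit $v_\infty$ solving the same equation with zero flat-boundary data; a boundary $C^{2,\alpha}$-type regularity result at the flat portion for the homogeneous equation provides the required polynomial $Q$ and the desired contradiction.

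Iterating the approximation lemma dyadically at scale $\mu$ centered at $x_0$ produces quadratic polynomials $P_k$ with $F(D^2 P_k, DP_k) = 0$, vanishing on the tangent hyperplane at $x_0$, and $\|u - P_k\|_{L^\infty(B_{\mu^k}^+(x_0))} \leq C \mu^{k(2+\alpha)}$. The rescaling $u_k(y) := \mu^{-k(2+\alpha)}(u(x_0 + \mu^k y) - P_k(x_0 + \mu^k y))$ remains normalized, and applying the lemma to it extracts the next polynomial. The coefficients of $P_k$ form Cauchy sequences whose limit defines the Hessian $H(x_0)$ (and reconfirms $Du(x_0)$); passing to the limit in $F(D^2 P_k, DP_k) = 0$ yields $F(H(x_0), Du(x_0)) = 0$ on $B_{1/2}^0$, while telescoping the geometric errors gives the claimed decay $|u(x) - P_\infty(x)| \leq C |x - x_0|^{2+\alpha}$.

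For the Hölder estimate on $H$, pick $x_0, x_0' \in B_{1/2}^0$, set $r := |x_0 - x_0'|$, and compare the two quadratic expansions on $B_r^+(x_0) \cap B_r^+(x_0')$: their difference is a quadratic polynomial of sup-norm $\lesssim r^{2+\alpha}$ on a region of scale $r$, so its Hessian—namely $H(x_0) - H(x_0')$—satisfies $|H(x_0) - H(x_0')| \leq C r^\alpha$, with $C$ controlled by $\|u\|_{L^\infty(B_1^+)}$. I expect the main obstacle to be the approximation lemma: without a convexity hypothesis on $F$, one must rely crucially on the flatness of the boundary and the vanishing of $u$ there to obtain a boundary $C^{2,\alpha}$-type estimate for the homogeneous limiting equation, which is substantially more delicate than its interior counterpart and is precisely what makes the flat-boundary reduction supplied by A\ref{assump_domain} essential at this stage.
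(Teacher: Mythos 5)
The paper does not prove this proposition at all: it is quoted verbatim from the literature, with the proof attributed to \cite[Theorem 1.2]{SilSir_2014}. Your attempt must therefore be judged on its own, and it contains a genuine gap: the compactness proof of your approximation lemma is circular. In the Caffarelli-type scheme, a contradiction/compactness argument works because the limit solves a \emph{better} equation than the original one (constant coefficients, homogeneous, or convex), for which second-order regularity is already known. Here the original equation is already $F(D^2v,Dv)=0$ with $F$ merely uniformly elliptic, so the limit $v_\infty$ gains nothing, and the ``boundary $C^{2,\alpha}$-type regularity result at the flat portion for the homogeneous equation'' you invoke to produce the quadratic polynomial $Q$ is precisely the statement you are trying to prove. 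There is no classical result to fall back on: without convexity, Evans--Krylov is unavailable and interior $C^{2}$ estimates are in general false (Nadirashvili--Vl\u{a}du\c{t}), so the existence of a second-order expansion for $v_\infty$ at the flat boundary cannot be taken for granted --- it is exactly the content of the Silvestre--Sirakov theorem. You flag this lemma as ``the main obstacle,'' but the rest of your argument (dyadic iteration, Cauchy coefficients, passage to the limit in $F(D^2P_k,DP_k)=0$, and the comparison of expansions at nearby boundary points to get $\|H\|_{C^\alpha}$) is routine once the lemma is granted; the entire difficulty of the proposition is concentrated in the step you left unproved.

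For completeness: the actual argument of \cite{SilSir_2014} does not proceed by a generic compactness lemma but exploits the structure at the flat boundary. Since $u\equiv 0$ on $B_1^0$, the tangential--tangential part of the Hessian vanishes there; the mixed tangential--normal second derivatives are obtained, with H\"older continuity, by applying the Krylov-type boundary $C^{1,\alpha}$ estimate to tangential difference quotients of $u$, which again lie in the Pucci class and vanish on the flat boundary; and the normal--normal entry is then \emph{solved for} from the equation, using the strict monotonicity of $F$ in the $M_{dd}$ direction granted by uniform ellipticity. The quadratic decay of the remainder is then established by an iteration with barriers adapted to the half-ball, not by compactness against a known limiting regularity. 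If you want a self-contained proof, this is the mechanism you would need to reproduce; otherwise the honest route is the one the paper takes, namely citing \cite[Theorem 1.2]{SilSir_2014}.
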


In the former proposition, $H$ represents the Hessian of $u$. For a proof of Proposition \ref{prop_boundary1}, we refer the reader to \cite[Theorem 1.2]{SilSir_2014}. The next proposition extends the Hessian regularity obtained above to a $\delta$-neighbourhood of $\partial\Omega$. To be more precise, let $0<\delta\ll1$; we define $\Omega_\delta\subset\Omega$ as
\[
	\Omega_\delta:=\left\lbrace x\in \Omega\,:\, \dist(x,\partial \Omega)< \delta\right\rbrace.
\]

\begin{proposition}[Interior regularity near the boundary]\label{prop_C2alpha_SS}
Let $u\in C(\Omega)$ be an $L^p$-viscosity solution to \eqref{eq_limit}. Suppose Assumptions A\ref{assump_domain}, A\ref{assump_Felliptic}, A\ref{assump_F-kappa}, and A\ref{assump_FC1} are in force. Then there exist universal constants $\alpha\in(0,1)$, and $C>0$, such that $u\in C^{2,\alpha}(\Omega_\delta)$ and
\[
	\left\|u\right\|_{C^{2,\alpha}(\Omega_\delta)}\leq C\|g\|_{L^\infty(\partial \Omega)}.
\]
Here, $0<\delta\ll1$ depends on universal constants, the boundary $\partial \Omega$, and $\omega_F$ on a ball $B_R\subset S(d)$, with universal radius $R>0$ depending also on $\partial\Omega$.
\end{proposition}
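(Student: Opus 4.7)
The plan is to combine the pointwise quadratic approximation at boundary points provided by Proposition \ref{prop_boundary1} with the interior $C^{2,\alpha}$-regularity that is available for solutions of $F(D^2u,Du)=0$ once the $C^1$-hypothesis A\ref{assump_FC1} is in force. First I would invoke A\ref{assump_domain} to flatten $\partial\Omega$ locally via a $C^{2,\beta}$-diffeomorphism, reducing the geometry to the half-ball setting used in Proposition \ref{prop_boundary1} while producing a transformed operator that still satisfies A\ref{assump_Felliptic}--A\ref{assump_FC1}. Proposition \ref{prop_boundary1} then supplies, at each $x_0\in \partial\Omega\cap B_{1/2}$, a quadratic polynomial
\[
    P_{x_0}(y) := u(x_0)+Du(x_0)\cdot (y-x_0)+\tfrac{1}{2}H(x_0)(y-x_0)\cdot(y-x_0),
\]
with $F(H(x_0),Du(x_0))=0$, the bound $|u(y)-P_{x_0}(y)|\leq C|y-x_0|^{2+\alpha}$, and $\|H\|_{C^\alpha}\le C\|u\|_{L^\infty(B_1^+)}$.

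Next, for an interior point $x\in \Omega_\delta$, let $x_0\in\partial\Omega$ be a projection with $d:=\dist(x,\partial\Omega)=|x-x_0|<\delta$. On $B_d(x)$ the remainder $w:=u-P_{x_0}$ is bounded by $C d^{2+\alpha}$, so the rescaled function $\tilde w(z):=d^{-(2+\alpha)}w(x+dz)$ is uniformly bounded on $B_1$ and satisfies a uniformly elliptic equation obtained by linearising $F$ around $(H(x_0),DP_{x_0}(x))$. Assumption A\ref{assump_FC1} provides a modulus $\omega_F$ for $D_MF$; for $\delta$ small enough (so that the ball in $S(d)$ where $\omega_F$ operates contains every relevant $H(x_0)$) the perturbation from $(H(x_0),DP_{x_0}(x))$ stays within the regime where Savin's interior $C^{2,\alpha}$-estimate applies to $\tilde w$. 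Undoing the rescaling then yields the pointwise bound
\[
    |D^2u(x)-H(x_0)|+|Du(x)-DP_{x_0}(x)|\leq C d^{\alpha}\|u\|_{L^\infty(B_1^+)},
\]
together with a H\"older oscillation estimate for $D^2u$ on $B_{d/2}(x)$.

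The stated $C^{2,\alpha}(\Omega_\delta)$-bound then follows by a standard triangle-inequality argument combining the pointwise interior-from-boundary estimate just obtained, the H\"older continuity of $H$ along $\partial\Omega$ from Proposition \ref{prop_boundary1}, and the interior H\"older oscillation on each scale-$d$ ball. The main obstacle I anticipate is the rescaling step: one must verify that after subtracting $P_{x_0}$ and rescaling by $d^{2+\alpha}$, the resulting operator fits into the class for which Savin's interior $C^{2,\alpha}$-theory is available, with ellipticity constants, first-order Lipschitz constants, and modulus of continuity of $D_MF$ all controlled uniformly in $x_0$. This is where $\delta$ acquires its dependence on $\partial\Omega$ and on $\omega_F|_{B_R}$; choosing $R$ to dominate $|H(x_0)|$ for every $x_0$ in the flattened portion of $\partial\Omega$ (which is possible by the $L^\infty$-bound on $H$) closes the argument.
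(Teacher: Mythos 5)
Your argument is correct in outline and is essentially the proof behind the paper's citation: the paper does not reprove this proposition but quotes it from \cite[Theorem 1.3]{SilSir_2014}, whose proof is exactly your scheme of flattening the boundary, taking the quadratic expansion $P_{x_0}$ at the projected boundary point from Proposition \ref{prop_boundary1}, and applying a small-perturbation interior estimate on the ball of radius $\dist(x,\partial\Omega)$, with $\delta$ fixed through $\omega_F$ on a ball $B_R$ that contains every $H(x_0)$. One small adjustment: to invoke Savin's theorem literally you should rescale $u-P_{x_0}$ quadratically, i.e.\ by $\dist(x,\partial\Omega)^{-2}$, so that the rescaled function has size $\lesssim \dist(x,\partial\Omega)^{\alpha}$ and falls below Savin's smallness threshold once $\dist(x,\partial\Omega)<\delta$; with your normalization by $\dist(x,\partial\Omega)^{-(2+\alpha)}$ the function is only of order one and the required smallness is displaced into a closeness-to-linear statement for the rescaled operator, which is not uniform over all Hessian arguments and so does not directly fit the perturbation theorems you would want to quote.
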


For the proof of Proposition \ref{prop_C2alpha_SS}, we refer the reader to \cite[Theorem 1.3]{SilSir_2014}. We conclude this section by collecting definitions and notation used in the study of Sobolev boundary regularity (see \cite[Chapter 7]{Caffarelli_Cabre1995}; see also \cite[Definition 4]{PimSanTei2022}).

A paraboloid of opening $M>0$ is a function of the form
\[
	P_M(x):=\ell(x)\pm\frac{1}{2}M\left|x\right|^2,
\]
where $\ell(\cdot)$ is an affine function. The choice of sign determines whether $P$ is concave or convex.
\begin{definition}\label{def_goodsets}
Let $U \subset \Omega$ be an open subset, and take $0<\rho<{\rm diam}(O)/\pi$. For $M > 0$, define
\[
	\underline{G}_M(u, U) = \underline{G}_M(U)
\]
as the set of all points $z \in U$ for which there exists a concave paraboloid $P_M$ satisfying
\begin{enumerate}
	\item $u(z) = P_M(z)$;
	\item $u(x) > P_M(x)$ for all $x \in B_{\rho}(z)$, with $x\neq z$.
\end{enumerate}
Also, set
\[
	\overline{G}_M(u, U) = \overline{G}_M(U)
\]
as the set of all points $z \in U$ for which there exists a convex paraboloid $P_M$ satisfying
\begin{enumerate}
	\item $u(z) = P_M(z)$;
	\item $u(x) < P_M(x)$ for all $x \in B_{\rho}(z)$, with $x\neq z$.
\end{enumerate}
Finally 
\[
G_M(U) = \underline{G}_M(U)\cap \overline{G}_M(U).
\]
\end{definition}
We also define
\[
	\underline{A}_M(U):=U\setminus \underline{G}_M(U)
\]
\[	
	\overline{A}_M(U):=U\setminus \overline{G}_M(U)
\]
and
\[
	A_M(U):=U\setminus G_M(U).
\]

Using this notation, the \emph{opening function} associated with $u\in C(\Omega)$, with respect to the subset $B\subset\Omega$, is denoted with $\Theta(x,B):B\to[0,+\infty]$ and given by
\begin{equation}\label{eq_petrenko}
	\Theta(u,B)(x):=\inf\left\lbrace M\,|\,x\in G_M(B)\right\rbrace\in [0,\infty],\quad x\in B.
\end{equation}
The integrability of $\Theta$ is equivalent to the regularity of $u$ in Sobolev spaces. Hence, we recall the distribution function of $\Theta$ in a set $B$, denoted with $\mu_\Theta$ and given by
\[
	\mu_\Theta (t):=\left|\left\lbrace x\in B \,|\, \Theta(x)>t\right\rbrace\right|.
\]
For completeness, we recall a lemma relating the integrability of $\Theta$ with the summability of its distribution function $\mu_\Theta$.

\begin{lemma}\label{lemma_distrLp}
Let $\Theta$ be defined as in \eqref{eq_petrenko} and set $B:=\Omega$. Fix constants $\eta>0$ and $M>1$. For $0<p<\infty$, we have $\Theta\in L^p(\Omega)$ if, and only if, 
\[
	S:=\sum_{k\geq 1} M^{pk} \mu_\Theta(\eta M^k)<\infty.
\]
In the case $S$ is finite, we also have
\[
	C^{-1}S\leq \left\|\Theta\right\|^p_{L^p(\Omega)}\leq C\left(\left|\Omega\right|+S\right),
\]
for some constant $C>0$, depending only on $\eta$, $M$ and $p$. 
\end{lemma}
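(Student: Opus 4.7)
The plan is to recognize that this lemma is a standard layer-cake/dyadic decomposition statement; the only subtlety lies in choosing the ratio $M$ to partition the range of $\Theta$ in a way matching the series defining $S$. I would begin by observing that, since $\Theta\ge 0$ is measurable on $\Omega$ (with $\mu_\Theta(0)\le|\Omega|<\infty$), the layer-cake identity gives
\[
\left\|\Theta\right\|_{L^p(\Omega)}^p=p\int_0^\infty t^{p-1}\mu_\Theta(t)\,dt.
\]
Next I would split the integral at $t=\eta$ and further decompose the tail into geometric annuli:
\[
p\int_0^\infty t^{p-1}\mu_\Theta(t)\,dt=p\int_0^\eta t^{p-1}\mu_\Theta(t)\,dt+p\sum_{k\ge 0}\int_{\eta M^k}^{\eta M^{k+1}}t^{p-1}\mu_\Theta(t)\,dt.
\]

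For the upper bound I would use monotonicity of $\mu_\Theta$, namely $\mu_\Theta(t)\le\mu_\Theta(\eta M^k)$ for $t\in[\eta M^k,\eta M^{k+1}]$, to control each annular piece by $\eta^p M^{p(k+1)}(1-M^{-p})\mu_\Theta(\eta M^k)$. Summing and pulling out $\eta^p(M^p-1)$, the resulting series equals $\mu_\Theta(\eta)+S\le|\Omega|+S$. The initial $[0,\eta]$ contribution is trivially bounded by $\eta^p|\Omega|$, and combining these yields the desired estimate $\left\|\Theta\right\|_{L^p(\Omega)}^p\le C(|\Omega|+S)$ with $C$ depending only on $\eta,M,p$.

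For the lower bound I would exploit the reverse monotonicity inequality $\mu_\Theta(t)\ge\mu_\Theta(\eta M^k)$ for $t\in[\eta M^{k-1},\eta M^k]$. Then
\[
p\int_{\eta M^{k-1}}^{\eta M^k}t^{p-1}\mu_\Theta(t)\,dt\ge \eta^p M^{pk}(1-M^{-p})\mu_\Theta(\eta M^k),
\]
so that summing over $k\ge 1$ produces $\left\|\Theta\right\|_{L^p(\Omega)}^p\ge \eta^p(1-M^{-p})S$, which is equivalent to $C^{-1}S\le\left\|\Theta\right\|_{L^p(\Omega)}^p$ with the same kind of constant. Finally, the equivalence $\Theta\in L^p(\Omega)\Longleftrightarrow S<\infty$ follows at once from these two-sided bounds.

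I do not anticipate any serious obstacle here: the argument is purely measure-theoretic and uses nothing beyond monotonicity of the distribution function, the finite measure of $\Omega$, and the geometric-series book-keeping. The only point requiring care is to make sure that the index shift between $\int_{\eta M^k}^{\eta M^{k+1}}$ (upper bound) and $\int_{\eta M^{k-1}}^{\eta M^k}$ (lower bound) is handled consistently, so that the sums produced line up with $S=\sum_{k\ge 1}M^{pk}\mu_\Theta(\eta M^k)$ as defined in the statement.
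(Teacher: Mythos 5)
Your proof is correct and is exactly the standard argument behind this lemma, which the paper itself does not reprove but recalls from Caffarelli--Cabr\'e: the layer-cake formula $\|\Theta\|_{L^p}^p = p\int_0^\infty t^{p-1}\mu_\Theta(t)\,dt$, a split at $t=\eta$, and monotonicity of $\mu_\Theta$ on the geometric blocks $[\eta M^k,\eta M^{k+1}]$, with $|\Omega|<\infty$ absorbing the low range. The index bookkeeping and the resulting constants (depending only on $\eta$, $M$, $p$) are handled correctly, and the two-sided bounds immediately give the stated equivalence.
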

As a consequence, the $p$-integrability of $\Theta$ amounts to the analysis of the summability of $M^{pk} \left|A_{M^k}(B)\right|$. Lastly, recall the definition of \emph{maximal function} associated with an element $f\in L^1_{loc}(\Rr^d)$; it is defined by
\[
	M(f)(x):=\sup_{r>0}\frac{1}{|Q_r|}\int_{Q_r(x)}|f(y)|\,{\rm d}y.
\]
We use elementary properties of the maximal function. Namely, that it is an operator of weak type $(1,1)$ and of strong type $(p,p)$, for $1<p\leq \infty$. That is, there exists $C>0$ such that 
\[
	\left|\left\lbrace x\in \Rr^d\,|\,M(f)\geq t\right\rbrace\right|\leq \frac{C}{t}\left\|f\right\|_{L^1(\Rr^d)},
\]
for all $t>0$ and
\[
	\left\|M(f)\right\|_{L^p(\Rr^d)}\leq C\left\|f\right\|_{L^p(\Rr^d)},
\]
for every $1<p\leq \infty$. In what follows, we discuss scaling properties of \eqref{eq_viscmin}. 

\subsection{Scaling properties}\label{subsec_scaling}

Let $u\in C(\Omega)$ be a viscosity solution to \eqref{eq_viscmin} and define $\overline u\in C(\Omega)$ as
\[
	\overline u(x):=\frac{u(rx)}{K},
\]
for some $0<r\leq 1$ and $K>0$. A simple computation ensures that $\overline u$ solves 
\[
	\min\left(\overline F_1(D^2\overline u, D\overline u,x),\overline F_2(D^2\overline u, D\overline u,x)\right)\leq \left\|\overline f\right\|_{L^\infty(\Omega)}
\]
and
\[
	\max\left(\overline F_1(D^2\overline u, D\overline u,x),\overline F_2(D^2\overline u, D\overline u,x)\right)\geq -\left\|\overline f\right\|_{L^\infty(\Omega)}
\]
where
\[
	\overline F_i(M,p,x):=\frac{r^2}{K}F_i\left(\frac{K}{r^2}M,\frac{K}{r}p,rx\right),\hspace{.4in}\mbox{for}\;i=1,2,
\]
and
\[
	\overline f(x)=\frac{r^2}{K}f(rx).
\]
Clearly, $\overline F_i$ satisfies the structural condition in Assumption A\ref{assump_Felliptic}. By taking $r=1$ and $K:=1+\left\|u\right\|_{L^\infty(\Omega)}+\left\|f\right\|_{L^\infty(\Omega)}$, we suppose $u$ to be a normalized viscosity solution to \eqref{eq_viscmin} and $\left\|f\right\|_{L^\infty(\Omega)}$ to be as small as required. 

Finally, standard covering arguments build upon Assumption A\ref{assump_domain}, allowing us to consider $\Omega=B_1^+$ and to study the boundary regularity along the flat boundary $B_1^0$. For simplicity, we set $\Omega=B_1^+$ in the sequel. When dealing with estimates in Sobolev spaces, we consider $\Omega=B_{14\sqrt{d}}^+$ to fine-tune with the usual notation \cite{Caffarelli_Cabre1995,WinterNiki}. The next section concerns approximation methods.

\section{Approximation methods}\label{sec_appmet}

In this section, we establish an approximation lemma under Assumptions A\ref{assump_F-kappa} and A\ref{assump_FC1}. Namely, requiring the limiting operator $F$ to be locally uniformly close to $F_i$ and to be differentiable with respect to the Hessian of solutions. 

This result ensures that solutions to \eqref{eq_viscmin} can be arbitrarily approximated by $C^{2,\alpha}$-regular functions \emph{near} the boundary $\partial\Omega$, with estimates. More precisely, it guarantees the existence of a function $h\in C^{2,\alpha}(\Omega_\delta)$ approximating $u$ in the $L^\infty$-norm, for some small $0<\delta\ll1$, depending on the dimension, $\lambda$, $\Lambda$ and $\partial\Omega$. It is instrumental in examining Sobolev boundary regularity for the solutions to \eqref{eq_viscmin}. This is the content of Proposition \ref{prop_approx_w2p}.

Afterwards, we remove Assumption \ref{assump_FC1} and work under Assumptions A\ref{assump_Felliptic} and A\ref{assump_F-kappau}. That is, we suppose $F_1$ and $F_2$ are close to a merely $(\lambda,\Lambda)$-elliptic operator $F$. However, such closeness is uniform with respect to the matrix entry. In this setting, we obtain an approximation lemma involving functions satisfying a $C^{2,\alpha}$-regularity estimate \emph{along the boundary} $\partial\Omega$. The approximation regimes in Proposition \ref{prop_approx_w2p} and Corollary \ref{cor_approx_2} are used in the proofs of Theorems \ref{thm_Sobolev} and \ref{Theo_Log-Lip}, respectively.

\begin{remark}\label{rem_scalingfromhell}\normalfont
As mentioned, we use standard flattening, covering and rescaling arguments to replace $\Omega$ with $B_{14\sqrt{d}}^+$. Moreover, instead of obtaining results in $\Omega_\delta$ we localize the arguments in the set $B_{12\sqrt{d}}^0\times(0,\delta)$. Because the boundary data is $C^{2,\alpha}$-regular, we can suppose $g$ is identically zero. Then we define, for $K=\delta/(14\sqrt{d})$, the rescaling
\[
	\bar u(x',x_d)= u(x',K x_d).
\]
Notice $\overline u$ solves the same inequalities as $u$ with $F_i$, $F$ and $f$ replaced with $\tilde F_i(M)=K^2 F_i(K^{-2}M)$, $\tilde F(M)=K^2 F(K^{-2}M)$ and $\tilde f(x',x_d)=K^2 f(x',K x_d)$. Note all the relevant assumptions still hold, since $\delta$ is a universal constant. Thus we transform the set $B_{14\sqrt{d}}^0\times(0,\delta)$ into $B_{14\sqrt{d}}^0\times(0,14\sqrt{d})$. From now on, to ease notation, we set $\Omega:= B_{14\sqrt{d}}'\times(0,14\sqrt{d})$.
\end{remark}

\begin{proposition}[Approximation Lemma I]\label{prop_approx_w2p}
Let $u\in C(B_1)$ be a normalized viscosity solution to \eqref{eq_viscmin}-\eqref{eq_bdry}. Suppose Assumptions A\ref{assump_domain}-A\ref{assump_F-kappa} and A\ref{assump_FC1} are in force.
Then there exists a function $h\in C^{2,\alpha}(B_{12\sqrt{d}}^+)$ and $\varphi\in C(B_{12\sqrt{d}}^+)$ such that $u-h\in S^*(\varphi)$ in $B_{12\sqrt{d}}^+$,
\[
	\left\|h\right\|_{C^{2,\alpha}\left(B_{12\sqrt{d}}^+\right)}\leq C\left\|u\right\|_{L^\infty(B_{14\sqrt{d}}'\times(0,14\sqrt{d}))},
\]
and
\begin{align*}
\|u-h\|_{L^\infty(B_{12\sqrt{d}}^+)}+\|\varphi\|_{L^d(B_{12\sqrt{d}}^+)}\leq C\left(\kappa^\gamma+\|f\|_{L^d(B_{14\sqrt{d}}'\times(0,14\sqrt{d}))}\right),
\end{align*}
for some universal exponents $\alpha, \gamma\in(0,1)$.
\end{proposition}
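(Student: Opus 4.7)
The plan is to construct $h$ as the solution of the limiting Dirichlet problem governed by $F$, secure $C^{2,\alpha}$-estimates for $h$ up to the flat boundary, and then compare $u$ with $h$ through the Alexandroff--Bakelman--Pucci estimate of Proposition \ref{prop_abp}. By Remark \ref{rem_scalingfromhell}, I may assume we work in $\Omega = B'_{14\sqrt{d}} \times (0, 14\sqrt{d})$ with $u\equiv 0$ on the flat portion. I then define $h$ as the unique viscosity solution of
\[
F(D^2 h, Dh) = 0 \;\; \text{in } B_{13\sqrt{d}}^+, \qquad h = u \;\; \text{on } \partial B_{13\sqrt{d}}^+,
\]
produced by Perron's method from the $(\lambda,\Lambda)$-ellipticity of $F$ (A\ref{assump_F-kappa}) and the comparison principle. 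The latter also yields $\|h\|_{L^\infty} \leq \|u\|_{L^\infty} \leq 1$ and $h \equiv 0$ on $B'_{13\sqrt{d}} \times \{0\}$.

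Next I would combine interior $C^{2,\alpha}$-regularity for $F(D^2 h, Dh)=0$, valid away from the flat boundary under A\ref{assump_FC1}, with a local version of Proposition \ref{prop_C2alpha_SS} along the flat portion $B_{13\sqrt{d}}^0$ where $h$ vanishes, to obtain some universal $\alpha \in (0,1)$ and the estimate
\[
\|h\|_{C^{2,\alpha}(B_{12\sqrt{d}}^+)} \leq C \|u\|_{L^\infty(\Omega)}.
\]
In particular, $\|Dh\|_{L^\infty} + \|D^2 h\|_{L^\infty} \leq C$. The principal technical obstacle here is that Proposition \ref{prop_C2alpha_SS} as stated applies to globally zero Dirichlet data, whereas $h$ has non-trivial data on the curved portion of $\partial B_{13\sqrt{d}}^+$; the remedy is the localized form of the Silvestre--Sirakov estimate from \cite{SilSir_2014}, which only demands vanishing data on the portion of the flat boundary near the point of interest --- a condition met by our normalization.

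Setting $w := u - h$, I next verify $w \in S^*(\varphi)$ with $\varphi := |f| + C\kappa$. Given a test $\phi \in W^{2,p}_{loc}$ such that $w - \phi$ attains a local maximum at $x_0$, the function $\phi + h$ touches $u$ from above, and the min-inequality in \eqref{eq_viscmin} furnishes an index $i \in \{1,2\}$ with
\[
F_i\bigl(D^2 h(x_0) + D^2\phi(x_0),\, Dh(x_0) + D\phi(x_0),\, x_0\bigr) \leq |f(x_0)|.
\]
Applying the structural condition A\ref{assump_Felliptic} to peel off $D^2 h$, $Dh$, then subtracting $F(D^2 h(x_0), Dh(x_0))=0$ and using the closeness A\ref{assump_F-kappa} together with the $C^{2,\alpha}$-bound above, gives
\[
\mathcal{M}^-_{\lambda,\Lambda}(D^2\phi(x_0)) - K|D\phi(x_0)| \leq |f(x_0)| + C\kappa.
\]
The max-inequality in \eqref{eq_viscmin} is handled symmetrically to produce $\mathcal{M}^+_{\lambda,\Lambda}(D^2\phi) + K|D\phi| \geq -\varphi$, so $w \in S^*(\varphi)$ with $\|\varphi\|_{L^d(B_{13\sqrt{d}}^+)} \leq \|f\|_{L^d} + C\kappa$. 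Since $w \equiv 0$ on $\partial B_{13\sqrt{d}}^+$, Proposition \ref{prop_abp} yields
\[
\|u-h\|_{L^\infty(B_{12\sqrt{d}}^+)} \leq C\|\varphi\|_{L^d} \leq C\bigl(\kappa + \|f\|_{L^d}\bigr),
\]
establishing the desired estimate (with $\gamma = 1$, or any smaller $\gamma \in (0,1)$).
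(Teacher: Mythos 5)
Your construction of $h$ and the $C^{2,\alpha}$-estimate in $B_{12\sqrt{d}}^+$ follow the paper, but the comparison step has a genuine gap. You claim $w=u-h\in S^*(\varphi)$ with $\varphi=|f|+C\kappa$ and then apply the ABP estimate ``since $w\equiv 0$ on $\partial B_{13\sqrt{d}}^+$''. For ABP to use the zero boundary values you need the differential inequalities for $w$ to hold in the \emph{whole} of $B_{13\sqrt{d}}^+$, but your bound $\varphi=|f|+C\kappa$ relies on the uniform bound $\|Dh\|_{L^\infty}+\|D^2h\|_{L^\infty}\leq C$, which is only available in the interior region $B_{12\sqrt{d}}^+$: on the curved part of $\partial B_{13\sqrt{d}}^+$ the boundary datum of $h$ is the merely H\"older-continuous $u$, so the $C^{2,\alpha}$-estimate degenerates as one approaches that boundary and $|D^2h|$ is not uniformly bounded in the shell $B_{13\sqrt{d}}^+\setminus B_{12\sqrt{d}}^+$. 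Since Assumption A\ref{assump_F-kappa} carries the term $\kappa|M|$, the error $\kappa\left(1+|Dh|+|D^2h|\right)$ is therefore not controlled by $C\kappa$ there, and you cannot place $w$ in $S^*\left(|f|+C\kappa\right)$ up to $\partial B_{13\sqrt{d}}^+$. Applying ABP only in $B_{12\sqrt{d}}^+$ does not help either, because then the boundary term $\max_{\partial B_{12\sqrt{d}}^+}|w|$ is exactly the quantity you are trying to estimate. Your claim $\gamma=1$ is a symptom of this over-optimism.

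The paper's proof repairs precisely this point with a two-parameter argument: using the H\"older continuity of $u$ (from $u\in S^*(f)$) and of $h$, and the fact that $u-h=0$ on $\partial B_{13\sqrt{d}}^+$, one gets $\|u-h\|_{L^\infty(\partial B_{13\sqrt{d}-\theta}^+)}\leq C\theta^\alpha\left(1+\|f\|_{L^d}\right)$ on an intermediate sphere; in $B_{13\sqrt{d}-\theta}^+$ the interior/boundary $C^{2,\alpha}$-estimate applied in balls of radius $\theta/2$ gives the degenerating bound $|D^2h|\leq C\theta^{\alpha-2}\left(1+\|f\|_{L^d}\right)$, so that $w\in S^*(\varphi)$ there with $\|\varphi\|_{L^d}\leq \|f\|_{L^d}+C\kappa\theta^{\alpha-2}\left(1+\|f\|_{L^d}\right)$. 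ABP in $B_{13\sqrt{d}-\theta}^+$ then combines the two contributions, and the choice $\theta^2<\kappa$ balances them, which is exactly where the exponent $\gamma=\alpha/2\in(0,1)$ comes from. You need to insert this shell-and-optimization step (or an equivalent quantitative control of $|D^2h|$ near the curved boundary) for the proof to close.
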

\begin{proof}
Let $h\in C(B_{13\sqrt{d}}^+)$ be the viscosity solution to the problem
\begin{equation*}
\begin{cases}
 F(D^2h,Dh)=0 &\mbox{ in } B_{13\sqrt{d}}^+\\
h=u &\mbox{ on } \partial B_{13\sqrt{d}}^+.
\end{cases}
\end{equation*}
It follows from Proposition \ref{prop_C2alpha_SS}, rescaled accordingly, that $h\in C^{2,\alpha}(B_{12\sqrt{d}}^+)$, for some universal $\alpha\in (0,1)$. Since $u\in S^*(f)$, we have $u\in C^{0,\beta}(B_{13\sqrt{d}}^+)$ with
\[
	\left\|u\right\|_{C^{0,\beta}\left(B_{13\sqrt{d}}^+\right)}\leq C\left(\left\|u\right\|_{L^\infty}+\left\|f\right\|_{L^d\left(B_{13\sqrt{d}}^+\right)}\right).
\]
Therefore we also have, by \cite[Proposition 4.13]{Caffarelli_Cabre1995},
\begin{equation*}
	\left\|h\right\|_{C^{0,\alpha}\left(B_{13\sqrt{d}}^+\right)}\leq C\left(1+\left\|u\right\|_{C^{0,\beta}\left(B_{13\sqrt{d}}^+\right)}\right)\leq C\left(1+\left\|f\right\|_{L^d\left(B_{14\sqrt{d}}^+\right)}\right),
\end{equation*}
where $0<\alpha<\beta$ are universal constants. Let $0<\theta\ll1$; since $u-h=0$ on $\partial B_{13\sqrt{d}}^+$, for every $x\in \partial B_{13\sqrt{d}-\theta}^+$ one gets
\[
	(u-h)(x)\leq \theta^\alpha\left\|u-h\right\|_{C^{0,\alpha}\left(B_{13\sqrt{d}}^+\right)}.
\]
Hence
\begin{equation}\label{eq:18}
	\left\|u-h\right\|_{L^\infty\left(\partial B_{13\sqrt{d}-\theta}^+\right)}\leq C\theta^\alpha\left(1+\left\|f\right\|_{L^d\left(B_{14\sqrt{d}}^+\right)}\right).
\end{equation}

For fixed $x_0\in B_{13\sqrt{d}-\theta}^+$, there are two possibilities. Either $B_{\theta/2}(x_0)\subset B_{13\sqrt{d}}^+$ or there exists a point $z_0\in B_{13\sqrt{d}-\theta}'$ such that $ x_0\in B_{\theta/2}^+(z_0)$. In any case, Proposition \ref{prop_C2alpha_SS} applied to $h-h(x_0)$ in $B_{\theta/2}(x_0)$ yields
\[
	\theta^2\|D^2h(x)\|\leq C\theta^\alpha\left(1+\|f\|_{L^d\left(B_{14\sqrt{d}}^+\right)}\right).
\]

Using the closeness regime in Assumption A\ref{assump_F-kappa}, one gets
\begin{equation}\label{eq_newregime}
	\left|F_i(M,p,x)- F(M,p)\right|\leq \kappa\left(1+\left|p\right|+\left|M\right|\right);
\end{equation}
the former inequality combined with and $ F(D^2h,Dh)=0$, produces
\begin{equation}\label{eq:19}
	\left|F_i(D^2h(x),Dh(x),x)\right|\leq C\theta^{\alpha-2}\kappa\left(1+\left\|f\right\|_{L^d\left(B_{14\sqrt{d}}^+\right)}\right).
\end{equation}
Therefore, we conclude the inequalities
\begin{equation*}
	\begin{cases}
		\min\left(F_1(D^2h,Dh,x),F_2(D^2h,Dh,x)\right)\leq C\theta^{\alpha-2}\kappa\left(1+\left\|f\right\|_{L^d\left(B_{14\sqrt{d}}^+\right)}\right)\\
		\max\left(F_1(D^2h,Dh,x),F_2(D^2h,Dh,x)\right)\geq -C\theta^{\alpha-2}\kappa\left(1+\left\|f\right\|_{L^d\left(B_{14\sqrt{d}}^+\right)}\right)
	\end{cases}
\end{equation*}
hold in the classical sense. Hence $u-h\in S^*(\varphi)$ in $B_{12\sqrt{d}}^+$, with
\[
	\left\|\varphi\right\|_{L^d\left(B_{12\sqrt{d}}^+\right)}\leq \left\|f\right\|_{L^d\left(B_{14\sqrt{d}}^+\right)}+C\theta^{\alpha-2}\kappa\left(1+\left\|f\right\|_{L^d\left(B_{14\sqrt{d}}^+\right)}\right).
\]
Combining this with \eqref{eq:18} we get
\[
	\begin{split}
		\|u-h\|_{L^\infty(B_{12\sqrt{d}}^+)}+\|\varphi\|_{L^d(B_{12\sqrt{d}}^+)}&\leq C\theta^\alpha\left(1+\left\|f\right\|_{L^d\left(B_{14\sqrt{d}}^+\right)}\right)\\
			&\quad+C\theta^{\alpha-2}\kappa\left(1+\|f\|_{L^d\left(B_{14\sqrt{d}}^+\right)}\right)\\
			&\quad+\left\|f\right\|_+\|f\|_{L^d\left(B_{14\sqrt{d}}^+\right)}.
	\end{split}
\]
By choosing $\theta^2<\kappa$ and setting $\gamma:=\alpha/2$ onde completes the proof.
\end{proof}

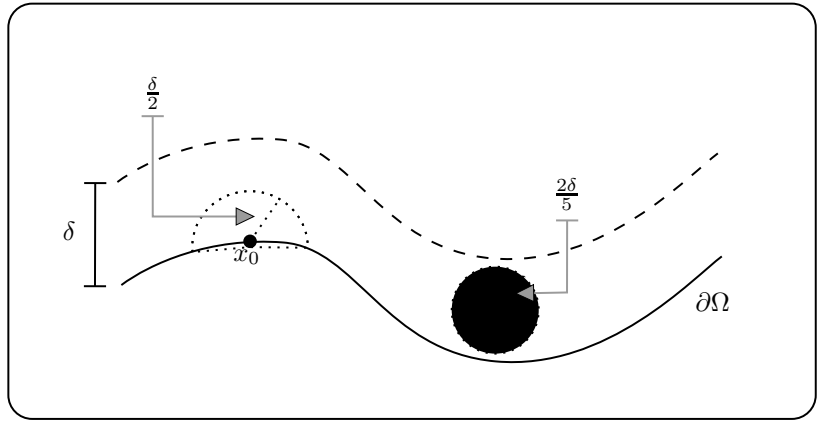
\begin{figure}[h!]
\centering

\begin{tikzpicture}[x=0.75pt,y=0.75pt,yscale=-1,xscale=1]

%Curve Lines [id:da2069242822854146] 
\draw  [dash pattern={on 4.5pt off 4.5pt}]  (100,101) .. controls (114.23,90.33) and (144.83,76.52) .. (181.83,79.52) .. controls (218.83,82.52) and (232.83,134.52) .. (288.83,139.52) .. controls (344.83,144.52) and (388.43,97.32) .. (402.83,86.52) ;
%Curve Lines [id:da0907392687700247] 
\draw    (102,153) .. controls (116.23,142.33) and (146.83,128.52) .. (183.83,131.52) .. controls (220.83,134.52) and (234.83,186.52) .. (290.83,191.52) .. controls (346.83,196.52) and (390.43,149.32) .. (404.83,138.52) ;
%Straight Lines [id:da3417678090290841] 
\draw    (88.83,101.52) -- (88.7,153.58) ;
\draw [shift={(88.7,153.58)}, rotate = 270.14] [color={rgb, 255:red, 0; green, 0; blue, 0 }  ][line width=0.75]    (0,5.59) -- (0,-5.59)   ;
\draw [shift={(88.83,101.52)}, rotate = 270.14] [color={rgb, 255:red, 0; green, 0; blue, 0 }  ][line width=0.75]    (0,5.59) -- (0,-5.59)   ;
%Shape: Circle [id:dp1002922331985232] 
\draw  [fill={rgb, 255:red, 0; green, 0; blue, 0 }  ,fill opacity=1 ] (164,130.91) .. controls (164,129.3) and (165.3,128) .. (166.91,128) .. controls (168.52,128) and (169.83,129.3) .. (169.83,130.91) .. controls (169.83,132.52) and (168.52,133.83) .. (166.91,133.83) .. controls (165.3,133.83) and (164,132.52) .. (164,130.91) -- cycle ;
%Shape: Arc [id:dp04567334755459651] 
\draw  [draw opacity=0][dash pattern={on 0.84pt off 2.51pt}] (137.9,136.35) .. controls (137.78,135.02) and (137.75,133.67) .. (137.82,132.3) .. controls (138.64,116.74) and (152.32,104.8) .. (168.39,105.64) .. controls (184.05,106.46) and (196.19,119.11) .. (196.05,134.16) -- (166.91,133.83) -- cycle ; \draw  [dash pattern={on 0.84pt off 2.51pt}] (137.9,136.35) .. controls (137.78,135.02) and (137.75,133.67) .. (137.82,132.3) .. controls (138.64,116.74) and (152.32,104.8) .. (168.39,105.64) .. controls (184.05,106.46) and (196.19,119.11) .. (196.05,134.16) ;  
%Straight Lines [id:da7086033399932701] 
\draw  [dash pattern={on 0.84pt off 2.51pt}]  (166.91,130.91) -- (181.77,109.65) ;
%Straight Lines [id:da7668407951295515] 
\draw [color={rgb, 255:red, 155; green, 155; blue, 155 }  ,draw opacity=1 ]   (117.81,67.75) -- (117.83,118.39) -- (165.83,118.39) ;
\draw [shift={(168.83,118.39)}, rotate = 180] [fill={rgb, 255:red, 155; green, 155; blue, 155 }  ,fill opacity=1 ][line width=0.08]  [draw opacity=0] (8.93,-4.29) -- (0,0) -- (8.93,4.29) -- cycle    ;
\draw [shift={(117.81,67.75)}, rotate = 269.98] [color={rgb, 255:red, 155; green, 155; blue, 155 }  ,draw opacity=1 ][line width=0.75]    (0,5.59) -- (0,-5.59)   ;
%Shape: Circle [id:dp20250703285573546] 
\draw  [fill={rgb, 255:red, 0; green, 0; blue, 0 }  ,fill opacity=1 ] (287.64,165.65) .. controls (287.64,164.04) and (288.95,162.74) .. (290.56,162.74) .. controls (292.17,162.74) and (293.47,164.04) .. (293.47,165.65) .. controls (293.47,167.26) and (292.17,168.57) .. (290.56,168.57) .. controls (288.95,168.57) and (287.64,167.26) .. (287.64,165.65) -- cycle ;
%Shape: Circle [id:dp20514146564002556] 
\draw  [fill={rgb, 255:red, 0; green, 0; blue, 0 }  ,fill opacity=0.15 ][dash pattern={on 0.84pt off 2.51pt}] (268.61,165.65) .. controls (268.61,153.53) and (278.44,143.71) .. (290.56,143.71) .. controls (302.68,143.71) and (312.5,153.53) .. (312.5,165.65) .. controls (312.5,177.77) and (302.68,187.6) .. (290.56,187.6) .. controls (278.44,187.6) and (268.61,177.77) .. (268.61,165.65) -- cycle ;
%Straight Lines [id:da16157825104986756] 
\draw  [dash pattern={on 0.84pt off 2.51pt}]  (290.56,165.65) -- (305.47,148.57) ;
%Straight Lines [id:da10157433153526474] 
\draw [color={rgb, 255:red, 155; green, 155; blue, 155 }  ,draw opacity=1 ]   (326.83,120.37) -- (326.77,156.73) -- (304.02,157.17) ;
\draw [shift={(301.02,157.23)}, rotate = 358.89] [fill={rgb, 255:red, 155; green, 155; blue, 155 }  ,fill opacity=1 ][line width=0.08]  [draw opacity=0] (8.93,-4.29) -- (0,0) -- (8.93,4.29) -- cycle    ;
\draw [shift={(326.83,120.37)}, rotate = 270.09] [color={rgb, 255:red, 155; green, 155; blue, 155 }  ,draw opacity=1 ][line width=0.75]    (0,5.59) -- (0,-5.59)   ;
%Rounded Rect [id:dp9126852317027754] 
\draw   (45.01,22.95) .. controls (45.01,16.32) and (50.38,10.95) .. (57.01,10.95) -- (440.21,10.95) .. controls (446.84,10.95) and (452.21,16.32) .. (452.21,22.95) -- (452.21,208.55) .. controls (452.21,215.18) and (446.84,220.55) .. (440.21,220.55) -- (57.01,220.55) .. controls (50.38,220.55) and (45.01,215.18) .. (45.01,208.55) -- cycle ;

% Text Node
\draw (71,119.4) node [anchor=north west][inner sep=0.75pt]    {$\delta $};
% Text Node
\draw (390,155.4) node [anchor=north west][inner sep=0.75pt]    {$\partial {\Omega}$};
% Text Node
\draw (107,41.4) node [anchor=north west][inner sep=4.5pt]    {$\frac{\delta}{2}$};
% Text Node
\draw (157,133.4) node [anchor=north west][inner sep=0.75pt]    {$x_{0}$};
% Text Node
\draw (272.75,164.15) node [anchor=north west][inner sep=4pt]    {$x_{1}$};
% Text Node
\draw (314,93.12) node [anchor=north west][inner sep=4pt]    {$\frac{2\delta}{5}$};

\end{tikzpicture}

\caption{{\small Proposition \ref{prop_approx_w2p} profits from the estimates in Proposition \ref{prop_C2alpha_SS}. Indeed, the approximating function $h$ solves a PDE with $C^{2,\alpha}$ estimates in a $\delta$-vicinity of $\partial\Omega$. Therefore, if we focus on  points $x_0\in\partial\Omega$ or in $x_1\in \Omega_\delta$, the function $h\in C^{2,\alpha}(\Omega_\delta)$ with estimates. This idea unlocks the proof of Proposition \ref{prop_C2alpha_SS}.}}\label{fig_c2aintbdry}

\end{figure}

By replacing the Assumption A\ref{assump_F-kappa} with A\ref{assump_F-kappau} in Proposition \ref{prop_approx_w2p}, the inequality in \eqref{eq_newregime} becomes
\[
	\left|F_i(M,p,x)- F(M,p)\right|\leq \tau\left(1+\left|p\right|\right),
\]
for every $M\in S(d)$, $p\in\mathbb{R}^d$, $x\in B_1^+$, and every $i=1,2$. Hence, a simplified version of the proof of Proposition \ref{prop_approx_w2p} produces the following approximation result.

\begin{corollary}[Approximation Lemma II]\label{cor_approx_2}
Let $u\in C(B_1^+)$ be a normalized viscosity solution to \eqref{eq_viscmin}-\eqref{eq_bdry}. Suppose Assumptions A\ref{assump_domain}, A\ref{assump_Felliptic}, and A\ref{assump_F-kappau} are in force.
Then for every $\varepsilon>0$ there exists $\tau>0$ such that, if A\ref{assump_F-kappau} holds for such a $\tau>0$, one finds a function $h\in C^{2,\alpha}(B_{1/2}^0)$ satisfying
\[
	\left\|h\right\|_{C^{2,\alpha}\left(B_{1/2}^+\right)}\leq C\left\|u\right\|_{L^\infty(B_1^+)}
\]
and
\begin{align*}
	\left\|u-h\right\|_{L^\infty(B_{1/2}^+)}\leq \varepsilon.
\end{align*}

\end{corollary}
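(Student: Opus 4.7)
The strategy is to mirror the proof of Proposition \ref{prop_approx_w2p}, exploiting the fact that the uniform closeness regime A\ref{assump_F-kappau} does not contain an $|M|$-term. As a consequence, we only need a bound on $|Dh|$ rather than on $|D^2h|$ for the approximating function; this allows us to bypass Proposition \ref{prop_C2alpha_SS} (and its reliance on A\ref{assump_FC1}) and appeal only to the boundary regularity in Proposition \ref{prop_boundary1}.

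Concretely, I would first define $h$ as the $L^d$-viscosity solution of the Dirichlet problem
\[
F(D^2 h, Dh) = 0\;\text{ in }B_1^+,\qquad h = u\;\text{ on }\partial B_1^+.
\]
Proposition \ref{prop_boundary1} then produces a second-order Taylor expansion of $h$ at every $x_0\in B_{1/2}^0$, with H\"older-continuous Hessian and the universal estimate $\|h\|_{C^{2,\alpha}(B_{1/2}^0)}\leq C\|u\|_{L^\infty(B_1^+)}$; this immediately gives the $C^{2,\alpha}$-bound stated in the corollary. Combining this boundary estimate with standard interior $C^{1,\alpha}$-estimates for the uniformly elliptic equation $F(D^2h,Dh)=0$ yields the universal gradient bound $\|Dh\|_{L^\infty(B_{3/4}^+)}\leq C$.

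Next, Assumption A\ref{assump_F-kappau} together with $F(D^2h,Dh)=0$ in the viscosity sense produces, for $i=1,2$,
\[
|F_i(D^2 h, Dh, x)| \leq \tau(1+|Dh|)\leq C\tau\qquad\text{in }B_{3/4}^+,
\]
in the $L^d$-viscosity sense. Hence $h\in S^*(C\tau)$ on $B_{3/4}^+$. Since the inequalities in \eqref{eq_viscmin} combined with A\ref{assump_Felliptic} give $u\in S^*(|f|)$, the difference $w:=u-h$ lies in $S^*(|f|+C\tau)$. As in the proof of Proposition \ref{prop_approx_w2p}, the universal $C^\alpha$-regularity of $u$ and $h$ up to $\partial B_1^+$ (where $w=0$) yields $|w|\leq C\theta^\alpha$ on $\partial B_{1-\theta}^+$, and applying Proposition \ref{prop_abp} on $B_{1-\theta}^+$ produces
\[
\|u-h\|_{L^\infty(B_{1/2}^+)}\leq C\bigl(\theta^\alpha+\tau+\|f\|_{L^d(B_1^+)}\bigr).
\]
Given $\varepsilon>0$, I would fix $\theta$ small, use the rescaling of Section \ref{subsec_scaling} to force $\|f\|_{L^d}$ to be suitably small, and finally take $\tau$ small enough, to secure $\|u-h\|_{L^\infty(B_{1/2}^+)}\leq \varepsilon$.

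The principal subtlety, as in Proposition \ref{prop_approx_w2p}, is the universal bound on $|Dh|$ (now playing the role of the bound on $|D^2h|$), which must be obtained without any $C^1$-hypothesis on $F$; this is precisely where Proposition \ref{prop_boundary1}, combined with interior $C^{1,\alpha}$-regularity for fully nonlinear uniformly elliptic equations, replaces the stronger Proposition \ref{prop_C2alpha_SS}.
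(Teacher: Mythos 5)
Your overall strategy coincides with the paper's: the authors obtain Corollary \ref{cor_approx_2} precisely by rerunning the proof of Proposition \ref{prop_approx_w2p} with the uniform closeness A\ref{assump_F-kappau} in place of A\ref{assump_F-kappa}, and you correctly identify the essential simplification, namely that only a gradient bound on $h$ is needed, so that the boundary estimate of Proposition \ref{prop_boundary1} (together with interior $C^{1,\alpha}$ regularity, and after the reduction to zero data on the flat boundary) can replace Proposition \ref{prop_C2alpha_SS} and Assumption A\ref{assump_FC1}. The choice of $h$, the $C^{\alpha}$ matching of $u$ and $h$ on $\partial B^+_{1-\theta}$, the ABP step, and the final choice of $\theta$, $\tau$ and the rescaling of $f$ all mirror the paper's argument.

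One step, however, is not justified as written: from ``$u\in S^*(|f|)$ and $h\in S^*(C\tau)$'' you infer that $w:=u-h$ lies in $S^*(|f|+C\tau)$. The Pucci classes are not closed under differences, so separate extremal inequalities for $u$ and $h$ give no information about $u-h$, and this is exactly the class membership your ABP application requires on all of $B^+_{1-\theta}$, not just near the flat boundary. In Proposition \ref{prop_approx_w2p} the analogous passage is legitimate because, thanks to A\ref{assump_FC1} and Proposition \ref{prop_C2alpha_SS}, $h$ is classically $C^{2,\alpha}$ in the whole region where the inequality is claimed; the inequalities $\min_i F_i(D^2h,Dh,x)\leq C\theta^{\alpha-2}\kappa(\cdots)$ and $\max_i F_i(D^2h,Dh,x)\geq -C\theta^{\alpha-2}\kappa(\cdots)$ then hold pointwise, and one places $u-h$ in $S^*$ by adding $h$ to the test functions of $u$ and using A\ref{assump_Felliptic}. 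In your setting $F$ is merely uniformly elliptic, so $h$ is only $C^{1,\alpha}$ in the interior (no $W^{2,p}$ or $C^2$ regularity is available without convexity-type or differentiability hypotheses), and the viscosity inequalities you derive for $h$ cannot simply be subtracted from those of $u$. To close this, you need an actual theorem on differences of viscosity sub- and supersolutions (sup-convolution/Jensen--Ishii arguments, which are applicable here since $F$ is continuous and $x$-independent), or an alternative device such as comparison with $W^{2,p}$ barriers built from the extremal operators, or a compactness argument yielding the smallness of $\|u-h\|_{L^\infty}$ directly. This is the one point to repair; the remainder of your proposal is the paper's intended ``simplified version'' of Proposition \ref{prop_approx_w2p}.
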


In the next section, an application of Corollary \ref{cor_approx_2} unlocks the proof of Theorem \ref{Theo_Log-Lip}.

\section{Boundary regularity in $C^{1,{\rm Log-Lip}}$-spaces}\label{sec_proofthm1}

In the sequel, we detail the proof of Theorem \ref{Theo_Log-Lip}.
	   
\begin{proof}[Proof of Theorem \ref{Theo_Log-Lip}]
We prove the theorem for $x_0\in B_1^0$ fixed, though arbitrary. For simplicity, we let $x_0\equiv 0$. We split the argument into three steps.

\medskip

\noindent{\bf Step 1 - }We prove the existence of a universal constant $0<\rho\ll1$ and of sequences $(a_n)_{n\in\mathbb{N}}\subset\mathbb{R}$ and $(M_n)_{n\in\mathbb{N}}\subset \mathbb{R}^{d\times d}$ such that
\begin{equation}\label{eq_redlion}
	\left|a_n-a_{n+1}\right|\leq C\rho^n,
\end{equation} 
\begin{equation}\label{eq_headoftheriver}
	\left|M_n-M_{n+1}\right|\leq C,
\end{equation} 
and
\begin{equation}\label{eq_judetheobscure}
	\sup_{x\in B_{\rho^n}^+}\left|u(0)-a_nx_d-M_n^{i,j}x_ix_j\right|\leq \rho^{2n},
\end{equation}
for every $n\in\mathbb{N}$. As usual, we resort to an induction argument. The next step accounts for the base case.

\medskip

\noindent{\bf Step 2 - }Set $a_0\equiv 0$ and $M_0\equiv 0$. Let $h$ be the function from Corollary \ref{cor_approx_2}, satisfying a $\tau$-proximity regime with respect to $u$ in $B_{3/4}^+$. Because of the boundary regularity available for $h$, we infer there exists $C>0$ such that 
\begin{equation}\label{eq_tree}
	\sup_{x\in B_\rho^+}\left|h(0)-Dh(0)\cdot x-\frac{1}{2}D^2h(0)x\cdot x\right|\leq C\rho^{2+\alpha},
\end{equation}
where $\alpha\in (0,1)$ depends only on the dimension, $\lambda$ and $\Lambda$. Notice the only non-trivial coordinate in $Dh(0)$ is the $d$-th one, since $h\equiv 0$ on $B_{3/4}^0$. Now, Corollary \ref{cor_approx_2} builds upon \eqref{eq_tree} in the usual way to ensure
\[
	\sup_{x\in B_\rho^+}\left|u(x)-Dh(0)\cdot x+\frac{1}{2}D^2h(0)x\cdot x\right|\leq C\tau+C\rho^{2+\alpha}.
\]
Define $\tau$ and $\rho$ as
\[
	\tau:=\left(\frac{\rho^2}{2C}\right)\hspace{.3in}\mbox{and}\hspace{.3in}\rho:=\left(\frac{1}{2C}\right)^\frac{1}{\alpha}.
\]
Take $a_1=Dh(0)\cdot e_d$ and $2M_1:=D^2h(0)$; the former computation ensures \eqref{eq_judetheobscure}. The universal estimates available for $h$ ensure \eqref{eq_redlion}-\eqref{eq_headoftheriver} also hold, and the base case follows. The next step concludes the induction argument.

\medskip

\noindent{\bf Step 3 - }Suppose we have established \eqref{eq_redlion}-\eqref{eq_judetheobscure} for $n=1,\ldots,k$. Define $u_k:B_1^+\to\mathbb{R}$ as
\[
	u_k(x):=\frac{u(\rho^kx)-a_k\rho^kx-\rho^{2k}M_kx\cdot x}{\rho^{2k}}.
\]
Notice that $u_k$ satisfies
\[
	\min\left(F_1^k(D^2u_k,Du_k,x),F_2^k(D^2u_k,Du_k,x)\right)\leq \left|f\right|\hspace{.2in}\mbox{in}\hspace{.2in}B_1^+
\]
and 
\[
	\max\left(F_1^k(D^2u_k,Du_k,x),F_2^k(D^2u_k,Du_k,x)\right)\geq -\left|f\right|\hspace{.2in}\mbox{in}\hspace{.2in}B_1^+,
\]
where
\[
	F_i^k(M,p,x):=F_i(M+M_k,\rho^kp+a_k e_d+\rho^kM_k\cdot x,x).
\]

Since Assumption \ref{assump_F-kappa} holds uniformly for $F_i$, it also holds for $F_i^k$. Therefore, the argument in the previous step ensures the existence of a real number $\tilde a_k$ and a matrix $\tilde M_k$ such that 
\[
	\sup_{x\in B_\rho^+}\left|u_k(x)-\tilde a_kx_d-\frac{1}{2}\tilde M_kx\cdot x \right|\leq \rho^2.
\]
Scaling back to the unit picture, we obtain
\begin{equation}\label{eq_challah}
	\sup_{x\in B_{\rho^{k+1}}}\left|u(x)-a_{k+1}x_d-\frac{1}{2}M_{k+1}x\cdot x\right|\leq \rho^{2(k+1)},
\end{equation}
where
\[
	a_{k+1}:=a_k+\rho^k\tilde a_k\hspace{.3in}\mbox{and}\hspace{.2in}M_{k+1}:=M_k+\tilde M_k.
\]
Gathering \eqref{eq_challah} and the definitions of $a_{k+1}$ and $M_{k+1}$, one verifies \eqref{eq_redlion}-\eqref{eq_judetheobscure} at the level $k+1$ and completes the induction argument. Once \eqref{eq_redlion}-\eqref{eq_judetheobscure} are available, the proof of the theorem follows from straightforward computations (e.g., \cite[p. 1398]{PimentelUrbano}).
\end{proof}

\section{Sobolev estimates up to the boundary}\label{sec_GRU}

In the sequel, we present the proof of Theorem \ref{thm_Sobolev}. In line with Remark \ref{rem_scalingfromhell}, we set $\Omega:=B_{14\sqrt{d}}^+$ throughout this section. We start with a boundary variant of Lin's $L^\mu$-estimate for the solutions to \eqref{eq_viscmin}-\eqref{eq_bdry}, as established in \cite[Proposition 2.12]{WinterNiki}. See also \cite{Lin1986}, for the original argument in the linear case, and \cite[Proposition 7.4]{Caffarelli_Cabre1995} for the fully nonlinear elliptic setting.

\begin{proposition}\label{Pro:W2delta_bound}
Let $u\in C(B_{14\sqrt{d}}^+)$ be a viscosity solution to \eqref{eq_viscmin}-\eqref{eq_bdry}. Suppose Assumptions A\ref{assump_domain} and A\ref{assump_Felliptic} are in force. Suppose further that $f\in L^p(B_{14\sqrt{d}}^+)$ . Then there exist universal constants $\mu>0$ and $C>0$ such that 
\[
	\int_{B_{13\sqrt{d}}^+}\left|D^2u\right|^\mu{\rm d}x\leq C \left(\left\|u\right\|_{L^\infty\left(B_{14\sqrt{d}}^+\right)}+\left\|f\right\|_{L^d\left(B_{14\sqrt{d}}^+\right)}\right).
\]
\end{proposition}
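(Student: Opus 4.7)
The plan is to adapt the classical boundary $L^\mu$-estimate of Lin type (see Caffarelli--Cabr\'e, Proposition 7.4, for the interior fully nonlinear version, and Winter--Nikolic, Proposition 2.12, for a boundary variant) to the free transmission setting, accounting for the gradient dependence allowed by Assumption A\ref{assump_Felliptic} and the min/max form of \eqref{eq_viscmin}. By Remark \ref{rem_scalingfromhell} I may assume $g\equiv 0$ on the flat boundary $B_{14\sqrt d}^0$. Combining the structural condition in Assumption A\ref{assump_Felliptic} with \eqref{eq_viscmin}, one obtains that $u$ satisfies, in the $L^p$-viscosity sense,
\[
\mathcal M^-_{\lambda,\Lambda}(D^2u)-K|Du|\leq |f|,\qquad
\mathcal M^+_{\lambda,\Lambda}(D^2u)+K|Du|\geq -|f|,
\]
so $u$ falls in the Pucci class with a lower-order gradient term, and the ABP estimate in Proposition \ref{prop_abp} is applicable.

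The strategy is to encode the Hessian bound through the opening function $\Theta(u,B_{13\sqrt d}^+)$ from \eqref{eq_petrenko} and reduce, via Lemma \ref{lemma_distrLp}, to showing a geometric decay for the distribution of $\Theta$. Concretely, I would prove the key one-step estimate
\[
\bigl|\{x\in B_{13\sqrt d}^+: \Theta(x)>M_0\}\bigr|\leq (1-\sigma)\bigl|B_{13\sqrt d}^+\bigr|+C\|f\|_{L^d(B_{14\sqrt d}^+)}^d,
\]
for universal $M_0>1$ and $\sigma\in(0,1)$. The proof of this estimate proceeds by constructing, at a definite proportion of points, upper and lower paraboloid barriers that touch $u$: for interior points the construction mimics the standard Cabr\'e barrier, while for points close to $B_{14\sqrt d}^0$ the zero Dirichlet condition allows one to exploit the flat boundary directly, using the ABP bound to control $u$ by a paraboloid of universal opening. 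Applied separately to the subsolution inequality (giving points in $\overline G_{M_0}$) and the supersolution inequality (giving points in $\underline G_{M_0}$), one then combines the two to control $|A_{M_0}|=|B\setminus G_{M_0}|$.

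The iteration is then carried out via a Calder\'on--Zygmund-type dyadic decomposition of $B_{13\sqrt d}^+$, splitting cubes into interior ones and those touching $B_{14\sqrt d}^0$. Rescaling, using the computations in Subsection \ref{subsec_scaling}, preserves both the class of inequalities and the zero boundary condition, so the one-step decay can be applied in each selected cube. The resulting bound $|A_{M_0^k}|\leq C(1-\sigma)^k$ feeds into Lemma \ref{lemma_distrLp} and, for $\mu>0$ chosen small enough that $M_0^{\mu}(1-\sigma)<1$, yields $\Theta\in L^\mu(B_{13\sqrt d}^+)$ with the stated quantitative bound. Since $\Theta$ dominates $|D^2u|$ at points where the Hessian exists, the conclusion follows.

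The main obstacle I anticipate is the construction of the boundary paraboloid barriers in the presence of the lower-order term $K|Du|$, which prevents using the classical ABP directly and forces the Koike--Swiech version of Proposition \ref{prop_abp}; this must be done while keeping the barrier's opening, proportion $\sigma$, and constants all universal. A secondary point is that, unlike in the standard one-operator setting, the two inequalities in \eqref{eq_viscmin} are not equivalent to a single equation with a Pucci right-hand side, so one must apply the touching-from-above and touching-from-below arguments independently and intersect the resulting good sets when assembling $G_{M_0}$.
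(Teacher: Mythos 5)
Your outline is correct and follows essentially the same route as the paper: the paper disposes of this proposition by citing the boundary variant of Lin's estimate in \cite[Proposition 2.12]{WinterNiki}, noting only that the argument there, stated for the class $S(f)$, goes through for $S^*(f)$ once Assumption A\ref{assump_Felliptic} places $u$ in the Pucci class with a lower-order gradient term. Your sketch simply reconstructs the content of that cited proof (paraboloid touching with the Koike--\'Swi\polhk{e}ch ABP of Proposition \ref{prop_abp}, Calder\'on--Zygmund iteration, and Lemma \ref{lemma_distrLp}), including the correct observation that the sub- and supersolution inequalities must be used separately and the resulting good sets intersected.
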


The proof of Proposition \ref{Pro:W2delta_bound} follows from the very same argument as in the proof of \cite[Proposition 2.12]{WinterNiki}. We note the former result is stated in \cite{WinterNiki} for the class $S(f)$. However, a close inspection of the proof reveals that it holds for the class $S^*(f)$. We notice Proposition \ref{Pro:W2delta_bound} implies an upper bound for the measure of the sets $A_t$, in terms of $t>0$. Indeed, it unlocks a decrease rate for the measure of those sets. This is the content of the next proposition.

\begin{proposition}\label{Pro:W2delta_Winter}
Let $u\in S^*(f)$ in $B_{12\sqrt{d}}^+\subset \Omega\subset \Rr^d_+$ and suppose $u$ is normalized. Suppose Assumptions A\ref{assump_domain}, A\ref{assump_Felliptic}, A\ref{assump_F-kappa} and A\ref{assump_FC1} are in force. Then there exist universal constants $C, \mu>0$ such that, if $\|f\|_{L^p(\Omega)}\leq 1$, we get 
\begin{equation}\label{eq_olesteen}
	\left|A_t(u,\Omega)\cap ((Q_1^+)+x_0)\right|\leq C t^{-\mu}
\end{equation}
for any $x_0\in B_{9\sqrt{d}}^+$.
\end{proposition}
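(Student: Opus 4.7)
The plan is to couple Proposition \ref{prop_approx_w2p} with Proposition \ref{Pro:W2delta_bound} applied to the difference $u-h$, and then convert the resulting $L^\mu$ integrability of the opening function $\Theta(u-h)$ into a tail bound for $\Theta(u)$ on the relevant cubes. The governing observation is that the opening function is stable under additive perturbation by a $C^2$-bounded function, so $\Theta(u)$ and $\Theta(u-h)$ differ by at most a bounded amount and tail estimates transfer.

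First, I would invoke Proposition \ref{prop_approx_w2p} to produce $h\in C^{2,\alpha}(B_{12\sqrt{d}}^+)$ and $\varphi\in L^d(B_{12\sqrt{d}}^+)$ with $u-h\in S^*(\varphi)$ in $B_{12\sqrt{d}}^+$, satisfying the universal bounds
\[
K:=\|D^2h\|_{L^\infty}\leq C,\qquad \|u-h\|_{L^\infty}+\|\varphi\|_{L^d}\leq C(\kappa^\gamma+\|f\|_{L^d})\leq C.
\]
Then I would establish the inclusion $G_M(u-h,\Omega)\subset G_{M+K}(u,\Omega)$, equivalently $A_{M+K}(u)\subset A_M(u-h)$. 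For $z\in\underline{G}_M(u-h)$ with witnessing concave paraboloid $P$ of opening $M$, the two-sided Taylor bound $h(x)\geq h(z)+Dh(z)\cdot(x-z)-(K/2)|x-z|^2$, valid on $B_\rho(z)$ since $\|D^2h\|_{L^\infty}\leq K$, yields
\[
u(x)=(u-h)(x)+h(x)\geq P(x)+h(z)+Dh(z)\cdot(x-z)-\tfrac{K}{2}|x-z|^2=:\widetilde P(x),
\]
where $\widetilde P$ is a concave paraboloid of opening $M+K$ matching $u$ at $z$; the strict inequality $(u-h)>P$ on $B_\rho(z)\setminus\{z\}$ transfers to $u>\widetilde P$ there. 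The symmetric argument with convex paraboloids handles $\overline G_M$, yielding the claim.

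Applying (a rescaled version of) Proposition \ref{Pro:W2delta_bound} to $u-h\in S^*(\varphi)$ next gives $\int_{B_{11\sqrt{d}}^+}\Theta(u-h)^\mu\,dx\leq C(\|u-h\|_{L^\infty}+\|\varphi\|_{L^d})\leq C$, and Markov's inequality produces $|A_s(u-h)\cap V|\leq Cs^{-\mu}$ for any $V\subset B_{11\sqrt{d}}^+$. Combining with the comparison step, for $t\geq 2K$ one has $|A_t(u)\cap V|\leq |A_{t-K}(u-h)\cap V|\leq C(t-K)^{-\mu}\leq C't^{-\mu}$, while the range $t<2K$ is absorbed into the constant using $|V|\leq C$. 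Choosing $V=Q_1^++x_0$ for $x_0\in B_{9\sqrt{d}}^+$, which lies inside $B_{11\sqrt{d}}^+$ since $\mathrm{diam}(Q_1^+)\leq\sqrt{d}$, concludes the proof. The main obstacle I foresee is the opening-function comparison: Definition \ref{def_goodsets} fixes a touching radius $\rho$ tied to the ambient domain and demands strict inequality on the punctured ball, so one must verify that a common admissible $\rho$ works for both $u$ and $u-h$ and that $\widetilde P$ qualifies as a genuine paraboloid of opening $M+K$. The remainder is essentially bookkeeping with the nested radii $11\sqrt d,12\sqrt d,13\sqrt d,14\sqrt d$ inherited from the approximation scheme.
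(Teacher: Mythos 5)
There is a genuine mismatch between your argument and the statement you are asked to prove. Proposition \ref{Pro:W2delta_Winter} is stated for a function that is merely in the Pucci class, $u\in S^*(f)$, normalized and with $\|f\|_{L^p(\Omega)}\leq 1$; it does not assume that $u$ solves the transmission problem \eqref{eq_viscmin}--\eqref{eq_bdry}. Your very first step invokes Proposition \ref{prop_approx_w2p}, whose hypotheses do require $u$ to be a viscosity solution of \eqref{eq_viscmin}--\eqref{eq_bdry} (together with the closeness regime A\ref{assump_F-kappa} and A\ref{assump_FC1}) in order to build the approximating profile $h$ solving $F(D^2h,Dh)=0$ with $h=u$ on the boundary. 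This extra structure is not available under the hypotheses of the proposition, and it cannot be smuggled in: the whole point of stating the proposition at the level of $S^*$ is that it is later applied, in Lemma \ref{Lm:first_it_w2p}, to the normalized difference $\tilde w=(C\kappa^\gamma)^{-1}(\tilde u-h)$, which lies in $S^*(\varphi)$ but does \emph{not} solve the transmission problem. With your proof, the proposition would be unusable exactly where the paper needs it, so the argument as written does not establish the stated result.

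The intended argument is much more direct and does not involve any approximation: Proposition \ref{Pro:W2delta_bound} (the Lin-type $L^\mu$ estimate, which the paper observes holds for the class $S^*(f)$) applied to $u$ itself gives
\[
\int_{B_{13\sqrt{d}}^+}\left|D^2u\right|^\mu\,{\rm d}x\leq C\left(\left\|u\right\|_{L^\infty}+\left\|f\right\|_{L^d}\right)\leq C,
\]
and since for $z\in A_t(u,\Omega)$ the opening function satisfies $\Theta(u,\Omega)(z)\geq t$ (controlled pointwise in terms of $|D^2u|$ where the latter exists, or directly via the $L^\mu$ bound on $\Theta$ as in \cite[Chapter 7]{Caffarelli_Cabre1995} and \cite{WinterNiki}), Chebyshev's inequality localized to $Q_1^++x_0\subset B_{13\sqrt{d}}^+$ for $x_0\in B_{9\sqrt{d}}^+$ yields $|A_t(u,\Omega)\cap(Q_1^++x_0)|\leq Ct^{-\mu}$. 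Your detour through $h$ essentially duplicates, one step too early, the improvement that Lemma \ref{Lm:first_it_w2p} performs afterwards. Incidentally, your paraboloid-comparison step ($A_{M+K}(u)\subset A_M(u-h)$ when $\|D^2h\|_{L^\infty}\leq K$, with the same touching radius $\rho$) is correct and is a useful observation, but it is not needed here: remove the approximation step, apply the $L^\mu$ estimate to $u$ directly, and the proof reduces to the Chebyshev argument above.
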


As usual, once Proposition \ref{Pro:W2delta_Winter} is available, we resort to an approximation lemma (Proposition \ref{prop_approx_w2p}) to improve the decay rate in \eqref{eq_olesteen}. We start by refining the bound on $|G_M(u)|$.
\begin{lemma}\label{Lm:first_it_w2p}
Let $u\in C(B_{14\sqrt{d}}^+)$ be a viscosity solution to \eqref{eq_viscmin}-\eqref{eq_bdry}. Let $\sigma\in (0,1)$ be fixed, though arbitrary. Suppose Assumptions A\ref{assump_domain}, A\ref{assump_Felliptic}, A\ref{assump_F-kappa} and A\ref{assump_FC1} are in force. Suppose further $f\in L^p(B_{14\sqrt{d}}^+)$. If there exists $x_0\in B_{9\sqrt{d}}^+$ such that 
\[
	G_1(u,\Omega)\cap \left(Q_2^++x_0\right)\neq \emptyset,
\]
then one can find a universal constant $M>1$ such that
\[
	\left|G_M(u,\Omega)\cap \left(Q_1^++x\right)\right|\geq 1-\sigma,
\]
for every $x\in B_{9\sqrt{d}}^+$.
\end{lemma}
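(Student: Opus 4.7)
\medskip

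\noindent\textbf{Proof plan for Lemma \ref{Lm:first_it_w2p}.} The strategy follows the classical Caffarelli route for $W^{2,p}$-estimates, adapted to the boundary setting: approximate $u$ in $L^\infty$ by a function $h$ enjoying uniform $C^{2,\alpha}$-bounds, use those bounds to see that $h$ lies automatically in $G_{M_0}$ with a universal opening $M_0$, and transfer the quantitative decay from $u-h$ (which solves a Pucci-type inequality by Proposition \ref{prop_approx_w2p}) to $u$ via a simple inclusion on \emph{good} sets.

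First, I would extract a universal $L^\infty$-bound on $u$. The hypothesis $G_1(u,\Omega)\cap(Q_2^++x_0)\neq\emptyset$ furnishes a point $z_0\in Q_2^++x_0$ where $u$ is touched simultaneously from above and below by paraboloids of opening $1$; combining the resulting pointwise control on $u$ and its affine tangent at $z_0$ with the ABP bound (Proposition \ref{prop_abp}) applied to $u$ minus these tangent planes yields $\|u\|_{L^\infty(\Omega)}\leq C$ universal, so I may normalize and also assume $\|f\|_{L^d(\Omega)}\leq 1$ after rescaling.

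Next, Proposition \ref{prop_approx_w2p} provides $h\in C^{2,\alpha}(B_{12\sqrt{d}}^+)$ and $\varphi\in C(B_{12\sqrt{d}}^+)$ with
\[
    \|h\|_{C^{2,\alpha}(B_{12\sqrt{d}}^+)}\leq C, \qquad u-h\in S^*(\varphi),
\]
and $\|u-h\|_{L^\infty}+\|\varphi\|_{L^d}\leq C(\kappa^\gamma+\|f\|_{L^d})=:\eta$, where $\eta>0$ can be made as small as desired by choosing $\kappa$ and $\|f\|_{L^d}$ sufficiently small. Taylor expansion on $h$, together with the uniform $C^{2,\alpha}$-bound, implies that at every point $z$ in the relevant domain, $h$ is touched from above and below by a paraboloid of opening $M_0:=C'(1+\rho^\alpha)$, for a universal $M_0$. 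In other words, $A_{M_0}(h,\Omega)=\emptyset$ in $B_{10\sqrt{d}}^+$.

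Then I would use the elementary inclusion
\[
    \underline{G}_t(u-h,\Omega)\cap\underline{G}_{M_0}(h,\Omega)\subseteq\underline{G}_{t+M_0}(u,\Omega)
\]
(adding a concave paraboloid of opening $M_0$ to one of opening $t$ yields a concave paraboloid of opening $t+M_0$), and its convex counterpart, to obtain
\[
    A_{t+M_0}(u,\Omega)\subseteq A_t(u-h,\Omega)\cup A_{M_0}(h,\Omega)=A_t(u-h,\Omega).
\]
Applying Proposition \ref{Pro:W2delta_Winter} to $u-h$ (whose $L^\infty$ norm and source $\varphi$ are already below $1$) then gives, for every $x\in B_{9\sqrt{d}}^+$,
\[
    \bigl|A_{t+M_0}(u,\Omega)\cap(Q_1^++x)\bigr|\leq \bigl|A_t(u-h,\Omega)\cap(Q_1^++x)\bigr|\leq C\,t^{-\mu}.
\]

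Finally, I would choose $t^*$ large enough that $C(t^*)^{-\mu}<\sigma$ (universal) and set $M:=t^*+M_0$. The resulting bound on $|A_M(u,\Omega)\cap(Q_1^++x)|$ is exactly $|G_M(u,\Omega)\cap(Q_1^++x)|\geq 1-\sigma$, completing the proof. The main obstacle I anticipate is bookkeeping: one must ensure the approximation parameters $(\kappa,\|f\|_{L^d})$ are compatible with the normalization hypothesis of Proposition \ref{Pro:W2delta_Winter} when applied to $u-h$, and that $M_0$ genuinely remains universal (depending only on $d$, $\lambda$, $\Lambda$, and the fixed radius $\rho$ from Definition \ref{def_goodsets}). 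Once this is in order, the remaining estimates are soft.
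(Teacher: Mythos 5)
Your overall architecture is the same as the paper's: reduce to a (suitably normalized) solution, approximate by the solution $h$ of $F(D^2h,Dh)=0$ so that the boundary estimate of Proposition \ref{prop_C2alpha_SS} gives a universal $C^{2,\alpha}$-bound and hence $A_{M_0}(h,\Omega)\cap Q_1^+=\emptyset$ for a universal $M_0$, use the inclusion $A_{t+M_0}(u)\subset A_t(u-h)\cup A_{M_0}(h)$, and control $A_t(u-h)$ through Proposition \ref{Pro:W2delta_Winter}. Two points, however, deviate from the paper, and the second one is a genuine gap. First, a minor one: the paper does not bound $\|u\|_{L^\infty}$ via ABP (after the flattening the datum vanishes only on the flat portion of the boundary, so ABP alone does not normalize $u$); instead it uses the touching point in $G_1(u,\Omega)\cap(Q_2^++x_0)$ to subtract an affine function $L$ and divide by a large constant, obtaining the growth bound $-|x|^2\leq \tilde u\leq |x|^2$ outside $B_{12\sqrt d}^+$, and it redefines $F_1,F_2,F$ to absorb the gradient shift so that Proposition \ref{prop_approx_w2p} applies to $\tilde u$. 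Your normalization step should be recast in this form.

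The substantive issue is how you produce the smallness $\sigma$. You apply Proposition \ref{Pro:W2delta_Winter} to $w=u-h$ itself (using only that its norms are below $1$) and then let $t\to\infty$, so that your threshold is $M=t^*+M_0$ with $C(t^*)^{-\mu}<\sigma$, i.e. $M$ depends on $\sigma$. This never uses the key output of the approximation, namely $\|w\|_{L^\infty}+\|\varphi\|_{L^d}\leq C\kappa^\gamma$, and it is incompatible with the way the lemma is used afterwards: in the proof of Theorem \ref{thm_Sobolev} one first fixes $M$ and then chooses $\sigma:=(2M^p)^{-1}$, so $M$ must be independent of $\sigma$; with your choice one would need $CM^{-\mu}\lesssim M^{-p}$, which is impossible since $\mu$ is a small universal exponent and $p\geq d>\mu$. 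The paper avoids this by rescaling: set $\tilde w:=(C\kappa^\gamma)^{-1}w$, which is normalized and lies in $S^*$ of a normalized right-hand side, apply Proposition \ref{Pro:W2delta_Winter} to $\tilde w$, and use the scaling identity $A_{M_0}(w)=A_{M_0/(C\kappa^\gamma)}(\tilde w)$ at the \emph{fixed} level $t=M_0$. This yields $\left|A_{M_0}(w)\cap Q_1^+\right|\leq C\kappa^{\mu\gamma}M_0^{\mu}$, which is made smaller than $\sigma$ by shrinking the closeness parameter $\kappa$ of Assumption A\ref{assump_F-kappa}, while $M$ (a fixed multiple of $M_0$) remains independent of $\sigma$. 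In short: the tunable parameter must be $\kappa$, not the paraboloid opening $t$; as written, your argument proves a version of the lemma too weak to feed the Calder\'on--Zygmund iteration.
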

\begin{proof}
Let 
\[
\tilde{u}=\frac{u-L}{C}
\]
where $L=a+b\cdot x$ is an affine function and $C$ is taken sufficiently large such that $\|\tilde{u}\|_{L^\infty}\leq 1$ and
\[
-|x|^2\leq \tilde{u}(x)\leq |x|^2 \quad \mbox{ in } B_{14\sqrt{d}}^+\setminus B_{12\sqrt{d}}^+.
\]
Let $\tilde F_i(M,p,x)=C^{-1}F_i(CM,Cp+Cb,x)$, $\tilde F(M,p)=C^{-1} F(CM,Cp+Cb)$ and $\tilde f=C^{-1}f$. The rescaled data satisfy Assumptions A\ref{assump_Felliptic}, A\ref{assump_F-kappa} and A\ref{assump_FC1}, with the same constants. Also, $\tilde{u}$ solves the inequalities in \eqref{eq_viscmin} driven by the rescaled data.

As in the proof of Proposition \ref{prop_approx_w2p}, let $h$ be the solution of
\begin{equation*}
	\begin{cases}
		F(D^2h,Dh)=0 & \mbox{ in } B_{13\sqrt{d}}^+\\
		h=\tilde u & \mbox{ on } \partial B_{13\sqrt{d}}^+.
	\end{cases}
\end{equation*}
The maximum principle ensures that 
\[
\left\|h\right\|_{L^\infty\left(B_{13\sqrt{d}}^+\right)}\leq \left\|\tilde u\right\|_{L^\infty\left(B_{13\sqrt{d}}^+\right)}\leq 1.
\]
An application of Proposition \ref{prop_C2alpha_SS} ensures the existence of a universal constant $C_1>0$ such that 
\[
	\left\|h\right\|_{C^{2,\alpha}\left(B_{12\sqrt{d}}^+\right)}\leq C_1.
\]
Hence,
\[
	A_N\left(h,B_{12\sqrt{d}}^+\right)=\emptyset,
\]
for  $N:=2C_1$. Extend $h$ continuously such that $h=\tilde u$ outside $B_{13\sqrt{d}}^+$ and 
\[
	\left\|\tilde u-h\right\|_{L^\infty\left(B_{14\sqrt{d}}^+\right)}=\left\|\tilde u-h\right\|_{L^\infty\left(B_{12\sqrt{d}}^+\right)}.
\] 
Hence $\left\|\tilde u-h\right\|_{L^\infty(\Omega)}\leq 2$, and
\[
	-2-|x|^2\leq h(x)\leq 2+|x|^2\quad \mbox{ in } \Omega\setminus B_{12\sqrt{d}}^+.
\]
These estimates imply
\[
	A_{M_0}(h,\Omega)\cap (Q_1^+)=\emptyset
\]
for some universal $M_0>0$. Set $w:=\tilde u-h$ and resort to Proposition \ref{prop_approx_w2p} to conclude
\[
	\left\|w\right\|_{L^\infty\left(B_{12\sqrt{d}}^+\right)}+\left\|\varphi\right\|_{L^d\left(B_{12\sqrt{d}}^+\right)}\leq C\left(\kappa^\gamma+\left\|g\right\|_{L^d}\right)\leq C\kappa^\gamma,
\]
with $w\in S^*(\varphi)$ in $B_{12\sqrt{d}}^+$. It follows that
\[
	\left\|w\right\|_{L^\infty\left(B_{14\sqrt{d}}^+\right)}\leq \left\|w\right\|_{L^\infty\left(B_{12\sqrt{d}}^+\right)}\leq C\kappa^\gamma.
\]
Finally, observe that $\tilde w= (C\kappa^\gamma)^{-1}w$ satisfies the assumptions of Proposition \ref{Pro:W2delta_Winter}; then we obtain, for $t>1$,
\[
	\left|A_t(\tilde{w}, B_{14\sqrt{d}}^+)\cap Q_1^+\right|\leq Ct^{-\mu},
\]
where $C>0$ is a universal constant. Noting that $A_{2M_0}(\tilde u)\subset \left(A_{M_0}(w)\cup A_{M_0}(h)\right)$, and using the scaling properties of $A_t$, we get
\[
	\left|A_{2M_0}(\tilde u, B_{14\sqrt{d}}^+)\cap Q_1^+\right|\leq C\kappa^{\mu \gamma}M_0^\mu.
\]
To complete the proof, notice $A_{2M_0}(\tilde u,B_{14\sqrt{d}}^+) = A_{2CM_0}(u,B_{14\sqrt{d}}^+)$, let $M=2CM_0$ and choose $0<\kappa\ll1$ sufficiently small.
\end{proof}

Once again, for the sake of completeness, we recall the consequence of the so-called Calder\'on-Zygmund cube decomposition. Let $Q_1$ be the unit cube and split it dyadically and successively, calling $\tilde{Q}$ a predecessor of $Q$ if we obtain $Q$ from the splitting of $\tilde{Q}$. The following lemma is pivotal in our analysis.

\begin{lemma}[Corollary to the Calder\'on-Zygmund cube decomposition]\label{Lm:CZ_cube}
Let $A\subset B\subset Q_1$ be measurable sets. Let $0<\sigma<1$ be such that
\begin{enumerate}
\item $|A|\leq \delta$;
\item If $Q$ is a dyadic cube such that $|A\cap Q|>\sigma|Q|$, then $\tilde Q\subset B$.
\end{enumerate}
Then $|A|\leq \sigma |B|$.
\end{lemma}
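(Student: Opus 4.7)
\medskip

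\textbf{Plan of proof.} The statement is the classical Calderón--Zygmund covering corollary. My plan is to apply the Calderón--Zygmund dyadic decomposition of $A$ at level $\sigma$, inside the unit cube $Q_1$, using hypothesis (1) as the starting condition that prevents $Q_1$ itself from being ``bad'', and then use hypothesis (2) together with the nesting property of dyadic cubes to bootstrap to the conclusion $|A|\leq\sigma|B|$.

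\medskip

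\textbf{Step 1: The Calderón--Zygmund stopping-time family.} Starting from $Q_1$, I would bisect dyadically and select the collection $\{Q_k\}_{k\geq 1}$ of \emph{maximal} (i.e.\ largest) dyadic cubes satisfying $|A\cap Q_k|>\sigma|Q_k|$. Hypothesis (1), read as $|A|\leq\sigma|Q_1|$, guarantees that $Q_1$ itself is not selected, so the selection process is well-defined. By maximality, the immediate dyadic predecessor $\tilde Q_k$ of each $Q_k$ satisfies $|A\cap \tilde Q_k|\leq \sigma|\tilde Q_k|$. Moreover, being chosen as maximal, the $Q_k$'s are pairwise disjoint, and the Lebesgue differentiation theorem yields $|A\setminus\bigcup_k Q_k|=0$ (because at a.e.\ density point of $A$, small enough dyadic cubes are eventually selected).

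\medskip

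\textbf{Step 2: Pass to parents and trim to a disjoint subfamily.} By hypothesis (2), applied to each $Q_k$ (since $|A\cap Q_k|>\sigma|Q_k|$), one obtains $\tilde Q_k\subset B$ for every $k$. In general the parents $\tilde Q_k$ may overlap, but any two dyadic cubes are either disjoint or nested, so I can extract the maximal parents under inclusion, call them $\{\tilde P_j\}$; these are pairwise disjoint, remain inside $B$, and satisfy $\bigcup_k \tilde Q_k=\bigcup_j \tilde P_j$. Crucially, each $\tilde P_j$ is itself some $\tilde Q_k$, so it inherits
\[
|A\cap \tilde P_j|\leq \sigma|\tilde P_j|.
\]

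\medskip

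\textbf{Step 3: Assemble the estimate.} Since $A\subset\bigcup_k Q_k\subset\bigcup_k\tilde Q_k=\bigcup_j\tilde P_j$ up to a null set, and the $\tilde P_j$'s are disjoint and contained in $B$,
\[
|A|=\sum_{j}|A\cap \tilde P_j|\leq \sigma\sum_j|\tilde P_j|=\sigma\Bigl|\bigcup_j \tilde P_j\Bigr|\leq \sigma|B|,
\]
which is the desired conclusion.

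\medskip

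\textbf{Expected difficulty.} The only delicate point is the passage from $Q_k$'s to the parents $\tilde Q_k$'s while avoiding double counting; here the nesting/disjointness dichotomy of dyadic cubes is what makes the trimming step in Step 2 go through cleanly. Hypothesis (1) is used only to ensure the selection procedure starts strictly below $Q_1$; hypothesis (2) is used in Step 2 to place the parents inside $B$; and the stopping property $|A\cap \tilde Q_k|\leq\sigma|\tilde Q_k|$ is what produces the $\sigma$ factor in the final bound.
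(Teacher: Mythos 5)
Your proof is correct and is exactly the standard covering argument from Caffarelli--Cabr\'e (Lemma 4.2), which is all the paper does here: it gives no proof of its own and simply cites that reference, so your stopping-time selection of maximal bad dyadic cubes, passage to predecessors via hypothesis (2), and disjointification of the parents reproduces the intended argument. Your reading of hypothesis (1), $|A|\le\delta$, as $|A|\le\sigma|Q_1|$ is also the right call, since the $\delta$ in the paper's statement is evidently a typo for $\sigma$ inherited from the cited lemma.
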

For the proof of Lemma \ref{Lm:CZ_cube}, we refer the reader to \cite[Lemma 4.2]{Caffarelli_Cabre1995}.
\begin{proposition}\label{Lm:iterations_w2p}
Let $u\in C(B_{14\sqrt{d}}^+)$ be a viscosity solution to \eqref{eq_viscmin}-\eqref{eq_bdry}. Let $\sigma\in (0,1)$ be fixed, though arbitrary. Suppose Assumptions A\ref{assump_domain}, A\ref{assump_Felliptic}, A\ref{assump_F-kappa} and A\ref{assump_FC1} are in force. Suppose further $f\in L^p(B_{14\sqrt{d}}^+)$. Set
\begin{equation*}
	\begin{split}
		A:=&\, A_{M^{k+1}}\left(u,B_{14\sqrt{d}}^+\right)\cap Q_1^+\\
		B:=&\, (A_{M^{k+1}}\left(u,B_{14\sqrt{d}}^+\right)\cap Q_1^+)\cup \left\lbrace x\in Q_1^+ \,:\, M\left(f^d\right)\geq \left(C_0M^k\right)^d\right\rbrace.
	\end{split}
\end{equation*}
Then $|A|\leq \sigma|B|$, where $M>1$ is universal.
\end{proposition}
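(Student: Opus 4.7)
The target is a Calderón–Zygmund type propagation statement: we want to feed Lemma~\ref{Lm:CZ_cube} with $A$ and $B$ (modulo what I read as a typo, where $A = A_{M^{k+1}}(u,\Omega)\cap Q_1^+$ and $B = \bigl(A_{M^{k}}(u,\Omega)\cap Q_1^+\bigr)\cup\{x\in Q_1^+:M(f^d)(x)\geq (C_0 M^k)^d\}$, which makes the nested inclusion $A\subset B$ and the iteration scheme meaningful). So I have to verify the two hypotheses of Lemma~\ref{Lm:CZ_cube}: the global measure bound $|A|\leq \sigma$, and the local implication ``if $|A\cap Q|>\sigma|Q|$ for a dyadic $Q\subset Q_1^+$ then the predecessor $\tilde Q$ is contained in $B$''.

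\textbf{Global bound.} For the first hypothesis, since $A_{M^{k+1}}\subset A_1$, it suffices to apply Lemma~\ref{Lm:first_it_w2p} to $u$ itself: the hypothesis $G_1(u,\Omega)\cap(Q_2^+ + x_0)\neq\emptyset$ can be arranged by normalization (using the scaling discussion in Section~\ref{subsec_scaling} together with $\|u\|_{L^\infty}+\|f\|_{L^d}\leq 1$, which forces the existence of touching paraboloids of bounded opening). This yields $|G_M(u,\Omega)\cap Q_1^+|\geq 1-\sigma$, hence $|A|\leq\sigma$.

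\textbf{Local implication.} This is the heart of the argument, and I argue by contraposition: assuming $\tilde Q \not\subset B$, I will show $|A\cap Q|\leq \sigma|Q|$. Pick $x_1\in \tilde Q$ with $x_1\notin B$. Then
\[
x_1\in G_{M^k}(u,\Omega),\qquad M(f^d)(x_1) < (C_0 M^k)^d.
\]
Let $Q$ have side length $r=2^{-j}$ centered at $x_Q$, so $\tilde Q$ has side length $2r$ and is still contained in $Q_1^+$ (with its bottom face either interior to $\Omega$ or resting on $B_1^0$). Define the rescaling
\[
v(y) := \frac{u(x_Q + r y)- P_{M^k}(x_Q + r y)}{M^k r^2},
\]
where $P_{M^k}$ is the paraboloid of opening $M^k$ associated with $x_1$. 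The first condition on $x_1$ forces $\|v\|_{L^\infty(B_{14\sqrt{d}}^+)}$ to be universally bounded (the standard $\Theta$-versus-$\|\cdot\|_{L^\infty}$ trade), while $v$ solves the rescaled pair of viscosity inequalities with operators $\tilde F_i(M,p,y) := (M^k)^{-1} F_i(M^k M + D^2 P_{M^k},\, M^k r\, p + DP_{M^k}(x_Q+ry),\, x_Q+ry)$ and source $\tilde f(y) = (M^k)^{-1} f(x_Q+ry)$. The closeness regime of Assumption A\ref{assump_F-kappa} (after the matrix shift $M^k M \mapsto M^k M + D^2 P_{M^k}$) is preserved at the same universal $\kappa$, and $C^1$-differentiability of $F$ in $M$ is preserved; Assumption A\ref{assump_Felliptic} is scaling invariant. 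The control on $M(f^d)(x_1)$ together with $x_1\in\tilde Q$ gives, via the standard maximal-function absorption on the cube $C_0$-times larger,
\[
\|\tilde f\|_{L^d(B_{14\sqrt{d}}^+)} \leq C \Bigl(\frac{1}{|Q|}\int_{C_0 Q}|f|^d\Bigr)^{1/d}\cdot \frac{1}{M^k}\leq C\, \frac{C_0 M^k}{M^k}\leq C,
\]
which can be made $\leq 1$ by enlarging $C_0$ universally.

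\textbf{Closing the contradiction and the main obstacle.} With $v$ normalized and satisfying the hypotheses of Lemma~\ref{Lm:first_it_w2p} (the nonemptiness hypothesis $G_1(v)\cap (Q_2^++\text{pt})\neq\emptyset$ coming from the existence of the touching paraboloid at $x_1$, rescaled), that lemma produces
\[
|G_M(v,B_{14\sqrt{d}}^+)\cap (Q_1^+ + y_0)|\geq 1-\sigma,
\]
and undoing the rescaling converts $G_M(v)$ into $G_{M^{k+1}}(u)$ (up to adding the fixed paraboloid $P_{M^k}$, whose Hessian contributes exactly the $M^k$ required), giving $|G_{M^{k+1}}(u)\cap Q|\geq (1-\sigma)|Q|$, i.e.\ $|A\cap Q|\leq \sigma|Q|$, the desired contradiction. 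The main obstacle I anticipate is bookkeeping: verifying that adding the smooth paraboloid $P_{M^k}$ preserves the closeness regime of Assumption~A\ref{assump_F-kappa} (since the Hessian of $P_{M^k}$ is of order $M^k$, which is large) and that the rescaling is compatible with the boundary whether $Q$ is interior, touches the flat boundary $B_1^0$, or straddles it. For $Q$ touching $B_1^0$, the map $y\mapsto x_Q + ry$ sends $B_{14\sqrt{d}}^+$ into $\Omega$, and the Dirichlet condition is inherited; for interior $Q$, the approximation lemma's interior analogue (Proposition~\ref{prop_C2alpha_SS} combined with the interior $W^{2,p}$-theory of \cite{Caffarelli_Cabre1995}) replaces Lemma~\ref{Lm:first_it_w2p}, and both cases patch together to give the claim for all dyadic $Q\subset Q_1^+$.
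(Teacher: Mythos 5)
Your skeleton matches the paper's: the Calder\'on--Zygmund lemma (Lemma \ref{Lm:CZ_cube}), contraposition from a point $x_1\in\tilde Q\setminus B$ (and your reading of $B$ with $A_{M^k}$ in place of $A_{M^{k+1}}$ is indeed what the paper's proof uses), a dilation bringing the cube to unit scale, Lemma \ref{Lm:first_it_w2p} near the flat boundary and the interior Caffarelli--Cabr\'e analogue away from it. The genuine gap is in the rescaling step, which you yourself flag as unresolved. The paper does \emph{not} subtract the touching paraboloid: it sets $\tilde u=\frac{2^{2i}}{M^k}\,u\circ L$ with $L(y)=x_0'+2^{-i}y$ and $\tilde F_i(X,p,y)=M^{-k}F_i\left(M^kX,\,2^iM^{-k}p,\,L(y)\right)$, so that (i) $x_1\in G_{M^k}(u)$ translates \emph{exactly} into the hypothesis $G_1(\tilde u,\cdot)\cap(Q_2^++2^i(\tilde x_0-x_0'))\neq\emptyset$ of Lemma \ref{Lm:first_it_w2p}, the affine subtraction and normalization being performed inside that lemma; (ii) Assumption A\ref{assump_F-kappa} is preserved because the division by $M^k$ compensates the dilation of the matrix variable; and (iii) $G_M(\tilde u)$ scales back to $G_{M^{k+1}}(u)$ on the nose. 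Your variant, subtracting $P_{M^k}$, creates three problems you do not close: the boundary datum of $v$ on the flat boundary is no longer zero, while Lemma \ref{Lm:first_it_w2p} (through Proposition \ref{prop_C2alpha_SS}) is applied after the reduction to homogeneous data; the claimed conversion of $G_M(v)$ into $G_{M^{k+1}}(u)$ actually produces opening $(M+1)M^k>M^{k+1}$ once $D^2P_{M^k}$ is added back, so the sets do not match without an extra adjustment; and the uniform bound on $\|v\|_{L^\infty}$ requires the two-sided touching and the locality radius $\rho$ of Definition \ref{def_goodsets}, which you assert but do not verify.

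Two further slips. First, the constant $C_0$ must be chosen \emph{small}, not large: from $M(f^d)(x_1)<(C_0M^k)^d$ one gets $\|\tilde f\|_{L^d}\leq C(d)\,C_0$, and this must fall below the smallness threshold (of order $\kappa$) required for the approximation inside Lemma \ref{Lm:first_it_w2p}; enlarging $C_0$ moves you in the wrong direction. Second, in the boundary regime the dilation must be centred at the projection $x_0'$ of the cube's centre onto $\{x_d=0\}$ (the paper's case $|x_0-x_0'|<8\sqrt{d}\,2^{-i}$), so that the rescaled domain is a half-ball resting on the flat boundary where $u=0$; centring at $x_Q$, as you do, leaves the rescaled flat face strictly inside $\Omega$, where no Dirichlet information is available. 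The complementary case $|x_0-x_0'|\geq 8\sqrt{d}\,2^{-i}$ is recentred at $x_0+2^{-i-1}e_d$ and handled by the interior lemma; this explicit dichotomy is what patches the two regimes together.
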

\begin{proof}
We resort to Lemma \ref{Lm:CZ_cube}. Notice $A\subset B\subset Q_1^+$ and, from Lemma \ref{Lm:first_it_w2p}, we conclude $|A|\leq \sigma<1$. 

We aim to establish that dyadic cubes with $|A\cap Q|>\sigma|Q|$ must satisfy $\tilde{Q}\subset B$. Consider a generic dyadic cube $Q=Q_{1/2^i}^++x_0$ for some $i\in\mathbb{N}$ and $x_0$, and consider its predecessor $\tilde{Q}=Q_{1/2^{i-1}}^++\tilde{x_0}$.

Assume that $Q$ satisfies
\begin{equation}\label{eq:w2p_it_contr}
	\left|A\cap Q\right|=\left|A_{M^{k+1}}\left(u,B_{14\sqrt{d}}\right)\cap Q\right|>\sigma|Q|,
\end{equation}
however $\tilde{Q}\not \subset B$. In this case, there exists $x_1\in \tilde Q\setminus B$. It means that
\begin{equation}\label{eq:w2p_B}
	x_1\in \tilde{Q}\cap G_{M^k}\left(u,B_{14\sqrt{d}}^+\right)\hspace{.2in}\mbox{and}\hspace{.2in} M\left(f^d\right)(x_1)<\left(C_0M^k\right)^d.
\end{equation}
Now, we distinguish two cases depending on the distance from $x_0$ to $\left\lbrace x_d=0\right\rbrace$.

If $|x_0-x_0'|<\frac{8}{2^i}\sqrt{d}$, we consider $L(y)=x_0'+2^{-i}y$ and define 
\[
	\tilde u(y):=\frac{2^{2i}}{M^{k}}u(L(y)),
\]
\[
	\tilde F_i(X,p,y):=\frac{1}{M^{k}}F_i\left(M^kX,\frac{2^i}{M^{k}}p,L(y)\right),
\]
\[
	\tilde F(X,p):=\frac{1}{M^{k}}F\left(M^kX,\frac{2^i}{M^{k}}p\right),
\]
and
\[
	\tilde f(y):=\frac{1}{M^{k}}f\left(L(y)\right)
\]

Since $Q\subset Q_1^+$ implies $B_{14\sqrt{d}/2^i}(x_0')\subset B_{14\sqrt{d}}^+$, we have that $\tilde u$ solves
\begin{align*}
\begin{cases}
\min\left(\tilde F_1(D^2\tilde u,D\tilde u,x),\tilde F_2(D^2\tilde u,D\tilde u,x)\right)\leq |\tilde f(x)| & \mbox{ in } B_{14\sqrt{d}}^+\\
\max\left(\tilde F_1(D^2\tilde u,D\tilde u,x),\tilde F_2(D^2\tilde u,D\tilde u,x)\right)\geq -|\tilde f(x)| & \mbox{ in } B_{14\sqrt{d}}^+,
\end{cases}
\end{align*}
with $\tilde u=0  \mbox{ on } B_{14\sqrt{d}}'$, and the same assumptions as before.

Note that from \eqref{eq:w2p_B} we have
\begin{align*}
\left\|\tilde f\right\|_{L^d(B^+_{12\sqrt{d}})}\leq&\, \frac{2^i}{M^k}\left( \int_{Q_{42\sqrt{d}/2^i}(x_1)} f^d(x)\, dx\right)^\frac{1}{d}
\leq  C(d)C_0\leq \kappa
\end{align*}
for $C_0$ sufficiently small. Here we used the fact that $B^+_{12\sqrt{d}/2^i}(x_0')\subset Q_{42\sqrt{d}/2^i}(x_1)$.

From \eqref{eq:w2p_B} it also follows that 
\[
G_1(\tilde u, L^{-1}(B^+_{14\sqrt{d}}))\cap (Q_2^++2^i(\tilde{x_0}-x_0'))\neq \emptyset
\]
From $|x_0-\tilde{x_0}|\leq 2^{-2}\sqrt{d}$ we get $|2^i(\tilde{x_0}-x_0')|<9\sqrt{d}$. Hence, the assumptions of Lemma \ref{Lm:first_it_w2p} are in force. Therefore,
\[
	\left|G_M\left(\tilde u, L^{-1}(B^+_{14\sqrt{d}})\right)\cap \left(Q_1^++2^i(x_0-x_0')\right)\right|\geq 1-\sigma.
\]
Rescaling back to $u$, we get
\[
	\left|G_{M^k}\left(u,B^+_{14\sqrt{d}}\right)\cap Q\right|\geq (1-\sigma)\left|Q\right|,
\]
which contradicts \eqref{eq:w2p_it_contr}.

On the other hand, if $|x_0-x_0'|\geq \frac{8}{2^i}\sqrt{d}$ we conclude
\[
B^+_{8\sqrt{d}/2^i}(x_0+2^{-i-1}e_d)\subset B^+_{8\sqrt{d}}.
\]
Now, we use instead the transformation $L(y)=(x_0+2^{-i-1}e_d)+2^{-i}y$. Proceeding exactly as in the first case, we evoke \cite[Lemma 7.11]{Caffarelli_Cabre1995} instead of Lemma \ref{Lm:first_it_w2p} and complete the proof.
\end{proof}

\begin{proof}[Proof of Theorem \ref{thm_Sobolev}]
Let $0<\kappa\ll1$ be the same as Lemma \ref{Lm:first_it_w2p}. Fix $\varepsilon_0>0$ to be set further. Let $M>0$ and $C_0>0$ be the same as in Lemma \ref{Lm:iterations_w2p}, and choose $\sigma:=(2M^p)^{-1}$. We define
\begin{align*}
\alpha_k:=&\,\left|A_{M^k}\left(\tilde u, B^+_{14\sqrt{d}}\right)\cap Q_1^+\right|,\\
\beta_k:=&\,\left|\left\lbrace x\in Q_1^+\, |\, M\left(f^d\right)(x)\geq \left(C_0M^k\right)^d\right\rbrace\right|.
\end{align*}
An application of Proposition \ref{Lm:iterations_w2p} yields $\alpha_{k+1}\leq \sigma(\alpha_k+\beta_k)$; thus
\begin{align}\label{eq:w2p_alphak}
\alpha_k\leq \varepsilon_0^k+\sum_{i=0}^{k-1}\sigma^{k-i}\beta_i.
\end{align}
Using the fact that $M(f^d)$ is of strong-type $(p,p)$ for $1<p\leq \infty$, and the integrability of $f$, we get
\begin{align*}
	\left\|M\left(f^d\right)\right\|_{L^\frac{p}{d}(\Omega)}\leq&\, C(d,p)\left\|f^d\right\|_{L^\frac{p}{d}(\Omega)}=C(d,p)\left\|f\right\|_{L^p(\Omega)}^d\leq C(d,p)
\end{align*}
By Lemma \ref{lemma_distrLp} we get
\begin{align}\label{eq:w2p_bkMk}
\sum_{k\geq 0} M^{pk}\beta_k\leq C(d,p).
\end{align}
The choice of $\sigma$, together with \eqref{eq:w2p_alphak} and \eqref{eq:w2p_bkMk}, implies
\begin{align*}
\sum_{k\geq 0}M^{pk}\alpha_k\leq \sum_{k\geq 0}2^{-k}+\left(\sum_{k\geq 0}M^{pk}\beta_k\right)\left(\sum_{k\geq 0}M^{pk}\tau^k\right)\leq C(d,p)
\end{align*} 
By Lemma \ref{lemma_distrLp} again, we conclude
\[
\left\|D^2u\right\|_{L^p\left(B^+_{12\sqrt{d}}\right)}\leq C(d,p,M)
\]
as intended, and the proof is complete.
\end{proof}

\begin{remark}[${\rm BMO}$-estimates and $C^{1,{\rm Log-Lip}}$-regularity]\label{rem_bodleian}\normalfont
In case $p=\infty$, our arguments imply that $u\in W^{2,{\rm BMO}}(B_{1/2}^+)$. As a consequence, $u\in C^{1,{\rm Log-Lip}}(\overline{B_{1/2}^+})$, with estimates; see, for instance \cite{Cia}. It means that for every $x_0\in\overline{B_{1/2}^+}$ there exists $C>0$ such that 
	\[
		\sup_{x\in B_r(x_0)}\left|u(x)-u(x_0)-Du(x_0)\cdot(x-x_0)\right|\leq Cr^2\ln\frac{1}{\left|x-x_0\right|},
	\]
and the constant does not depend on the distance ${\rm dist}(x_0,\partial\Omega)$.
We notice that imposing a differentiability condition on the limiting profile $F$ improves the regularity available under merely uniform ellipticity conditions on the limiting operator. See \cite{Savin2007}. 
\end{remark}

\begin{remark}[Regularity in $W^{1,q}$-spaces and potential estimates]\label{rem_cairns}\normalfont
If $d/2\leq p_0<p<d$, it is well-known that $W^{2,p}$-regularity estimates are no longer available for the solutions to \eqref{eq_viscmin}. However, Proposition \ref{prop_approx_w2p} builds upon the arguments in \cite{Swiech1997}, under minor adjustments, to yield estimates in $W^{1,q}$-spaces. Here, the exponent $q>1$ can be chosen arbitrarily under the constraint
\[
	q<p^*:=\frac{dp}{d-p},
\]
with $d^*:=\infty$. 
To obtain estimates in the borderline case $q=\infty$, one may resort to the machinery of potential estimates, as put forward in \cite{DasKuuMin2014}.
\end{remark}

\bigskip

\noindent{\bf Acknowledgements:} The authors are grateful to Diego R. Moreira for his interest in the material in this paper and fruitful discussions. DJ is supported by PRIN 2022 7HX33Z - CUP J53D23003610006: Pattern formation in nonlinear phenomena. EP and DS are supported by the Centre for Mathematics of the University of Coimbra (CMUC, funded by the Portuguese Government through FCT/MCTES, DOI 10.54499/UIDB/00324/2020).

\bigskip

\bibliographystyle{plain}
\bibliography{my_bibliography}

\bigskip

\noindent\textsc{David Jesus}\\
Universit\`a di Bologna
\\Bologna, Piazza di Porta San Donato 5
\\40126, Italy\\
\noindent\texttt{david.jesus2@unibo.it}

\bigskip

\noindent\textsc{Edgard A. Pimentel (Corresponding author)}\\
University of Coimbra\\
CMUC, Department of Mathematics\\ 
3000-143 Coimbra, Portugal\\
\noindent\texttt{edgard.pimentel@mat.uc.pt}

\bigskip

\noindent\textsc{David Stolnicki}\\
University of Coimbra\\
CMUC, Department of Mathematics\\ 
3000-143 Coimbra, Portugal\\
\noindent\texttt{david.stolnicki@mat.uc.pt}

\end{document}